\newtheorem{theorem}{Theorem}[section]
\newtheorem{lemma}[theorem]{Lemma}
\newtheorem{corollary}[theorem]{Corollary}
\newtheorem{proposition}[theorem]{Proposition}
\newcommand{\precprec}{\prec\mathrel{\mkern-5mu}\prec}
\theoremstyle{definition}
\newtheorem{definition}{Definition}
\theoremstyle{remark}
\numberwithin{equation}{section}
\newcommand{\K}{\mathrm{K}}
\newcommand{\G}{\mathrm{G}}
\newcommand{\C}{{\mathbb C}}
\newcommand{\R}{{\mathbb R}}
\newcommand{\Z}{{\mathbb Z}}
\newcommand{\eps}{\varepsilon}
\newcommand{\SL}{\operatorname{SL}}
\renewcommand{\H}{{\mathbb H}}
\renewcommand{\Re}{\operatorname{Re}}
\renewcommand{\Im}{\operatorname{Im}}
\renewcommand{\H}{\mathcal{H}}
\renewcommand{\aa}{\mathfrak{a}}
\newcommand{\n}{\mathrm{n}}
\newcommand{\g}{\mathrm{g}}
\newcommand{\bb}{\mathfrak b}
\newcommand{\cc}{\mathfrak c}
\newcommand{\gf}{\mathfrak g}
\renewcommand{\d}{ \mathrm d}
\renewcommand{\n}{\mathrm{n}}
\renewcommand{\pmod}[1]{\,\,(\mathrm{mod}\,{#1})}
\title{On the greatest prime factor and uniform equidistribution of  quadratic polynomials}
\author{Lasse Grimmelt}
\address{Mathematical Institute, University of Oxford\\ Radcliffe Observatory Quarter, Woodstock Rd\\
Oxford OX2 6GG\\
UK}
\email{lasse.grimmelt@maths.ox.ac.uk}
\author{Jori Merikoski}
\address{Mathematical Institute, University of Oxford\\ Radcliffe Observatory Quarter, Woodstock Rd\\
Oxford OX2 6GG\\
UK}
\email{jori.merikoski@maths.ox.ac.uk}
\date{}
\subjclass[2020]{11N32, 11N75 primary}
\begin{document}
\begin{abstract}
We show that the greatest prime factor of  $n^2+h$ is at least $n^{1.312}$ infinitely often. This gives an unconditional proof for the range previously known under the Selberg eigenvalue conjecture. Furthermore, we get uniformity in $h \leq n^{1+o(1)}$ under a natural hypothesis on real characters. The same uniformity is obtained for the equidistribution of the roots of quadratic congruences modulo primes. We also prove a variant of the divisor problem for $ax^2+by^3$, which was used by the second author to give a conditional result about primes of that shape. 
\end{abstract}

\maketitle

\tableofcontents
\section{Introduction}
We consider quadratic polynomials of the shape $an^2+h$ and apply \cite{GMtechnical} by the authors, to obtain new results about their greatest prime factor, the equidistribution of their roots to prime moduli, and a certain divisor problem which is used in a recent work of the second author \cite{mx2y3}. 

A famous conjecture of Landau states that there are infinitely many primes of the form $n^2+1$. As an approximation, we consider the greatest prime factor of quadratic polynomials $an^2+h$ and show the following theorem that improves upon \cite{mlargepf} by the second author and \cite{pascadi2024large} by Pascadi in two aspects. First, we obtain unconditionally the  exponent $1.312$ that was previously known under the Selberg eigenvalue conjecture. Second, we get uniformity in the shift $h$ under an assumption involving the Legendre symbol $(\tfrac{x}{p})$. To state the assumption, we define
\begin{align*}
  \varrho_{a,h}(k):= \#\{ \nu \in \Z/k\Z: a\nu^2+h \equiv 0 \pmod{k} \}.
\end{align*}
This is a multiplicative function with $ \varrho_{a,h}(p) = 1 + (\frac{-ah}{p})$ for $\gcd(a,p)=1$. In contrast to all previous works, the proof is independent of progress towards the Selberg eigenvalue conjecture -- any fixed spectral gap would give the same quality of result.
\begin{theorem}\label{thm:primefactor}
There exists some small $\eps >0$ such that the following holds for all $X > \eps^{-1}$. Let $1 \leq h\leq X^{1+\eps}$ be square-free and let $1 \leq a\leq X^{\eps}$ with $\gcd(a,h)=1$. Suppose that for any $X^{\eps}<  Y < Z \leq X^2$ we have
\begin{align*}
  \sum_{Y \leq p < Z} \frac{\log p}{p} \varrho_{a,h}(p)\leq (1+\eps) \log Z/Y + 1/\eps.
\end{align*}
 Then there exists $n\in [X,2X]$ such that the greatest prime factor of $a n^2+h$ is greater than $X^{1.312}$.  
\end{theorem}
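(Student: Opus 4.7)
The plan is to argue by contradiction: suppose that for every $n \in [X, 2X]$ the greatest prime factor of $an^2+h$ is at most $X^{1.312}$, and study the Chebyshev-type identity
\begin{align*}
S := \sum_{n \in [X,2X]} \log(an^2+h) = \sum_{p \leq X^{1.312}} (\log p)\, T_p, \qquad T_p := \sum_{n \in [X, 2X]} v_p(an^2+h).
\end{align*}
Since $an^2+h \asymp X^2$ uniformly in the range of parameters allowed, the left-hand side is $(2+o(1))\, X \log X$. It therefore suffices to bound the right-hand side strictly below $2X\log X$ to reach a contradiction. Fix a small auxiliary parameter $\eta > 0$ and split the prime sum into three ranges: $p \leq X^{1-\eta}$ (\emph{small}), $X^{1-\eta} < p \leq X^{1+\eta}$ (\emph{transition}), and $X^{1+\eta} < p \leq X^{1.312}$ (\emph{large}).

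For the small range, the elementary Type-I identity $T_p = X\varrho_{a,h}(p)/(p-1) + O(\varrho_{a,h}(p)\log X/\log p)$ (valid since roots lift uniquely mod $p^k$ for $p \nmid 2a$) reduces the contribution to $X \sum_p (\log p)/p \cdot \varrho_{a,h}(p) + O(X)$. By the hypothesis on $\varrho_{a,h}$ applied dyadically above $X^\eps$, and trivially below, this is at most $(1-\eta + O(\eps))\, X \log X$. The transition range contributes only $O(\eta\, X \log X)$ via the same hypothesis applied on a short interval. To finish the argument one needs the large range to contribute at most $(0.312 + O(\eta + \eps))\, X\log X$; the trivial bound $T_p \leq \varrho_{a,h}(p)$ is too weak, and what is actually required is the averaged estimate $T_p = \varrho_{a,h}(p)\cdot X/p + \mathrm{(error)}$ in which the aggregate error over $p \in (X^{1+\eta}, X^{1.312}]$, weighted by $\log p$, is $o(X\log X)$.

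This averaged estimate is exactly the uniform equidistribution of the roots of $a\nu^2+h \equiv 0 \pmod p$ inside the short interval $[X, 2X] \pmod p$ of relative length $X^{-\eta}$, and is supplied by the bilinear-form machinery of \cite{GMtechnical}. Granting it, the large-prime contribution is at most $(1+O(\eps))(0.312 - \eta)\, X \log X + O(X/\eps)$ by the hypothesis on $\varrho_{a,h}$ applied on the interval $(X^{1+\eta}, X^{1.312}]$. Summing the three ranges gives a total of $(1.312 + O(\eps + \eta))\, X\log X$, which is strictly less than $2X\log X$ once $\eps, \eta$ are small, contradicting the lower bound on $S$.

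The principal obstacle — and the origin of the exponent $1.312$ — is the quality of the bilinear estimate underlying the equidistribution of roots: its admissible ``level of distribution'' fixes the largest $\theta$ for which the argument succeeds. The starting point is Poisson summation in $\nu$, which converts $T_p$ into a sum over Fourier coefficients of a theta series; these are then handled by the Kuznetsov trace formula. In previous approaches the resulting spectral sum could be controlled up to exponent $1.312$ only under the Selberg eigenvalue conjecture, which was used to discard a hypothetical exceptional-spectrum contribution. The new estimate of \cite{GMtechnical} instead absorbs that contribution into an additional average over an auxiliary parameter, so that any positive spectral gap is enough, making the exponent $1.312$ unconditional. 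Uniformity in $h$ is then preserved throughout: every Mertens-type input in the argument is supplied by the hypothesis on $\varrho_{a,h}$, in place of unconditional bounds that are not known with the required $h$-uniformity.
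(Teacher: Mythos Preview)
Your Chebyshev set-up is correct and matches the paper's starting point, but there is a genuine gap at the heart of the argument. You assert that the averaged estimate
\[
\sum_{X^{1+\eta} < p \leq X^{1.312}} (\log p)\Bigl(T_p - \frac{\varrho_{a,h}(p)X}{p}\Bigr) = o(X\log X)
\]
``is supplied by the bilinear-form machinery of \cite{GMtechnical}''. It is not. The machinery of \cite{GMtechnical} (as packaged in Theorems~\ref{thm:typeI} and~\ref{thm:typeII}) gives Type~I information $\sum_{d\leq D}|\sum_{k\equiv 0\,(d)}\cdots|$ and Type~II information $\sum_m\sum_n\alpha_m\beta_n\cdots$ for \emph{general} moduli, not an asymptotic for sums restricted to \emph{primes} $p$. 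Passing from Type~I/II information to statements about primes requires a sieve; the paper (following \cite{mlargepf}) uses Harman's sieve together with the linear sieve, and the hypothesis on $\varrho_{a,h}(p)$ is invoked precisely to control the sieve dimension, not merely to bound Mertens-type sums.

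This matters because your own calculation shows the gap cannot be papered over: if the prime equidistribution you claim really held up to $X^{\theta}$, your three ranges would sum to $(\theta+o(1))X\log X$ and you would obtain a contradiction for every $\theta<2$. Nothing in your argument singles out $1.312$. In the actual proof, the exponent $1.312$ is the numerical output of the Harman sieve optimisation in \cite[proof of Theorem~2]{mlargepf} when fed the specific Type~I range $D\leq X^{1/2-\eta}$ and Type~II range $X^{\alpha-1+2\eta}\leq N\leq X^{(2-\alpha)/3-4\eta/3}$ established here (see the two corollaries in Section~7). There is no single ``level of distribution'' $\theta_0$ beyond which equidistribution for primes fails; rather, Harman's sieve decomposes the indicator of primes into pieces, some of which can be evaluated via Type~I/II and some of which must be discarded with one-sided sieve bounds, and $1.312$ is the threshold at which the discarded pieces overwhelm the signal. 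Your sketch omits this sieve step entirely.
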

For $ah \leq X^{\eps^2}$ the hypothesis can be shown unconditionally by the classical zero-free region, since a possible Siegel zeros would help to make the sum small. It seems likely that this hypothesis is a fundamental obstruction, as its contraposition means that the $an^2+h$ have more  small prime factors than expected. We will not pursue this issue here, as our focus is the application of the automorphic methods developed in \cite{GMtechnical}.

In similar spirit, we can also obtain uniformity in $h$ for the equidistribution of roots of quadratic congruences to prime moduli, generalising the work of Duke, Friedlander, and Iwaniec \cite{DFIprimes}. We again need to include the distribution of $\varrho_{a,h}(p) $ in the statement. However, here the issue is of opposite nature and the theorem becomes trivial if $\varrho_{a,h}(p)$ is $0$ almost always, i.e. if there is a Siegel zero for the real character $(\frac{-ah}{\cdot})$. 
\begin{theorem}\label{thm:roots}
    Let $1 \leq h\leq X^{1+o(1)}$ be square-free and let $1 \leq a\leq X^{o(1)}$ with $\gcd(a,h)=1$. Then for any $0\leq \alpha <\beta \leq 1$ we have
    \begin{align*}
   \# \Bigl\{(p,\nu): p\leq X, \, a\nu^2+h\equiv 0 \pmod{p}, \, \frac{\nu}{p} \in ( \alpha, \beta] \Bigr\} =  (\beta-\alpha) \sum_{p \leq X} \varrho_{a,h}(p) + o\bigl(\pi(X)\bigr) .
    \end{align*}
\end{theorem}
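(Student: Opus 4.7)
By Weyl's equidistribution criterion, the statement reduces to establishing, for every fixed nonzero integer $m$, the cancellation
\begin{align*}
\sum_{p \leq X} \sum_{\substack{\nu \pmod p \\ a\nu^2+h \equiv 0 (p)}} e\bigl(m\nu/p\bigr) = o\bigl(\pi(X)\bigr).
\end{align*}
Note that if the quadratic character $\bigl(\tfrac{-ah}{\cdot}\bigr)$ possesses a Siegel zero, then $\sum_{p \leq X}\varrho_{a,h}(p) = o(\pi(X))$ and the asserted asymptotic holds trivially (both sides are $o(\pi(X))$). So we may assume $\sum_{p \leq X}\varrho_{a,h}(p) \asymp \pi(X)$, and we focus on the Weyl sum.

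The plan is to detect the primality of $p$ via a Heath-Brown (or Vaughan) combinatorial identity. This converts the Weyl sum into Type I and Type II bilinear forms
\begin{align*}
\sum_{d_1 \sim M}\sum_{d_2 \sim N} \alpha_{d_1}\beta_{d_2} \sum_{\substack{\nu \pmod{d_1d_2} \\ a\nu^2+h \equiv 0 (d_1d_2)}} e\bigl(m\nu/(d_1 d_2)\bigr)
\end{align*}
with $MN \asymp X$ in a range of shapes. After opening the inner exponential by the Chinese remainder theorem and applying the classical Sali\'e-sum evaluation of $\sum_\nu e(m\nu/d)$, these bilinear forms become averages of Kloosterman-type sums with moduli $d_1 d_2$ and arguments built from $h$ and $m$, weighted by the representation numbers $\varrho_{a,h}$.

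For the Type I ranges (very unbalanced $M,N$), direct evaluation of the inner Sali\'e sum yields a Gauss sum attached to $\bigl(\tfrac{-ah}{\cdot}\bigr)$ whose contribution is controlled by Weil-type bounds, and the zero-frequency term cancels against the $(\beta-\alpha)$ factor in the Weyl sum. For the Type II ranges we invoke the authors' main technical theorem from \cite{GMtechnical}: via the Kuznetsov trace formula it supplies cancellation in such bilinear Kloosterman-weighted sums uniformly in the shift $h$, requiring nothing beyond the existence of some fixed spectral gap. The main obstacle lies precisely in this Type II step: the automorphic objects have level growing with $ah$, and one must extract square-root cancellation from Kloosterman sums attached to $\sqrt{-ah/d}$ without any loss in $h$, which is exactly what \cite{GMtechnical} delivers and which is what pushes the admissible range of shifts up to $h \leq X^{1+o(1)}$.
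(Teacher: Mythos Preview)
Your high-level reduction is sound and matches the paper's: via Weyl's criterion and a combinatorial decomposition of the prime indicator, Theorem~\ref{thm:roots} reduces to Type~I and Type~II estimates of the appropriate shape. The paper follows the sieve of \cite{DFIprimes} rather than Heath--Brown or Vaughan, but this is a minor difference. What matters is that one needs Type~I information up to $D \leq X^{1/2-\eta}$ and Type~II information in a non-trivial range of $N$, both uniformly in $h \leq X^{1+o(1)}$ and with a bound independent of progress towards Selberg. Your sketch does not establish these.

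The genuine gap is in your treatment of the Type~I and Type~II sums. For Type~I, ``direct evaluation of the inner Sali\'e sum'' with the Weil bound gives a pointwise estimate of size $k^{1/2}$ per modulus, which is far too weak once summed over $k$ and $d$. (Your remark about a ``zero-frequency term'' cancelling $(\beta-\alpha)$ is also misplaced: the Weyl sums in question have $m\neq 0$.) The classical way to proceed is Gauss correspondence to Kloosterman sums followed by Kuznetsov on average, but as Section~\ref{sec:outline} explains, that route entangles the coefficients with the level, loses uniformity in $h$, and makes the result depend on the size of the spectral gap. The paper's point is precisely to \emph{avoid} this route: Theorems~\ref{thm:typeI} and~\ref{thm:typeII} are obtained instead by parametrising solutions via symmetric integer matrices and Heegner points (Section~3), applying the discrepancy bound Theorem~\ref{thm:technical} from \cite{GMtechnical}, and then bounding the resulting automorphic kernels by the divisor-switching arguments of Propositions~\ref{prop:heegner} and~\ref{prop:lowertriang}. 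For Type~II the same machinery is applied after a Cauchy--Schwarz step and the pair parametrisation of Lemma~\ref{lem:para2}. Your one-line appeal to \cite{GMtechnical} ``via the Kuznetsov trace formula'' both mischaracterises that result (it is a discrepancy bound for automorphic kernels, not a Kloosterman-sum estimate) and omits this entire apparatus, which is exactly where the uniformity in $h$ and the independence from $\theta$ are won.
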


Finally, in Section \ref{sec:x2y3} we state and prove a technical variant of the divisor problem for the binary polynomial $ax^2+by^3$. This was used by the second author \cite{mx2y3} to show the following result about primes of the shape $ax^2+by^3$, conditional to the hypothesis that certain Hecke eigenvalues exhibit square-root cancellation along the values of binary cubic forms. As is discussed in \cite[Section 1.2.3]{mx2y3}, a naive application of \cite{DFIprimes} does not provide a sufficient divisor function estimate for an asymptotic formula.
\begin{theorem}\label{thm:x2y3}   Let $a,b > 0$ be coprime integers. Assume that \cite[Conjecture $\mathrm{C}_a(\eps)$]{mx2y3} holds for all $\eps >0$.  Then 
    \begin{align*}
        \sum_{x \leq X^{1/2}} \sum_{y \leq X^{1/3}} \Lambda(ax^2+by^3) = (1+o(1)) X^{5/6}.
    \end{align*}
\end{theorem}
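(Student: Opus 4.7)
The plan is to deduce Theorem \ref{thm:x2y3} from the technical divisor variant proved in Section \ref{sec:x2y3}, by quoting the conditional result of \cite{mx2y3}. The work in \cite{mx2y3} establishes the asymptotic $\sum_{x,y}\Lambda(ax^2+by^3) = (1+o(1))X^{5/6}$ conditional on two ingredients: Conjecture $C_a(\eps)$, which is a square-root cancellation hypothesis for Hecke eigenvalues along binary cubic forms and feeds the Type~II bilinear part of the argument; and a Type~I input, namely an equidistribution-in-progressions estimate for $ax^2+by^3$ to moduli of an appropriate level. The former is assumed in our hypothesis, and the latter is exactly the content of the divisor variant we prove in Section \ref{sec:x2y3}. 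Theorem \ref{thm:x2y3} therefore reduces to that divisor variant.

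To prove the divisor variant I would proceed along the lines of Duke--Friedlander--Iwaniec \cite{DFIprimes} and the companion Theorem \ref{thm:roots}. Fix a modulus $q$ coprime to $2ab$; then for each $y$, the condition $ax^2+by^3 \equiv 0 \pmod{q}$ is a quadratic congruence in $x$ with solution count $\varrho_{a,by^3}(q)$ and roots $\nu(q;y)\pmod q$. Smooth dyadic partitioning followed by Poisson summation in $x$ splits the $x$-sum into a main term (from the zero frequency) producing the expected $X^{5/6}\varrho_{a,by^3}(q)/q$, plus an oscillatory remainder weighted by exponentials $\exp(2\pi i h\nu(q;y)/q)$ with $h\neq 0$. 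One then has to exhibit cancellation in this remainder after averaging over $q\sim Q$ and $y\leq X^{1/3}$, up to $Q$ as large as the reduction in \cite{mx2y3} requires.

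That cancellation is where the automorphic machinery of \cite{GMtechnical} enters. The sums over the roots $\nu(q;y)$ are linked by the Gauss correspondence to Fourier coefficients of half-integral weight forms of appropriate level, and \cite{GMtechnical} provides estimates that are uniform both in the discriminant parameter and in smooth weights on the modulus. This uniformity is crucial because here the discriminant $-aby^3$ varies with $y$, so the error terms must be summable in $y$ rather than absorb a logarithmic loss per value of $y$. After the $y$-sum one applies Cauchy--Schwarz and a large-sieve/spectral bound to close off the $(q,y)$-average, obtaining a power saving.

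The principal obstacle is precisely this shift uniformity: $-aby^3$ takes essentially $X^{1/3}$ distinct values, each of which must be controlled with error terms that remain adequate after summation over $y$. This is the same technical requirement driving the uniformity in $h$ in Theorems \ref{thm:primefactor} and \ref{thm:roots}, so we expect the general framework of \cite{GMtechnical} to supply what is needed. A secondary, more routine task is to verify that the main term emerging from the Poisson analysis matches the main term produced inside the Vaughan-type decomposition of \cite{mx2y3}; this amounts to a computation of local densities and the associated singular series, and should involve no new ideas.
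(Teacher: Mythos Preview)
Your high-level reduction is right: the paper indeed deduces Theorem~\ref{thm:x2y3} by quoting \cite{mx2y3}, with Conjecture~$C_a(\eps)$ supplying the Type~II input and the present paper supplying the missing Type~I$_2$ divisor estimate (Proposition~\ref{prop:typeI2}). The gap is in your sketch of how to prove that divisor estimate.

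Your plan is to fix $y$, apply Poisson in $x$ (or equivalently the single-shift Type~I machinery behind Theorems~\ref{thm:primefactor} and~\ref{thm:roots}) with discriminant $-aby^3$, and then sum over $y$ using the uniformity in $h$. That does not close. With $A\sim X^{1/2}$, $B\sim X^{1/3}$, $D\sim X^{1/4}$, applying Theorem~\ref{thm:typeI} for each fixed $y$ with $h=by^3\asymp X$ gives an error of size roughly $D^{1/2}A^{1/2}(D^{1/2}+h^{1/4})\asymp X^{5/8}$, and summing over the $X^{1/3}$ values of $y$ yields $X^{23/24}$, which exceeds the target $X^{5/6}$ by $X^{1/8}$. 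The uniformity in $h$ from Theorems~\ref{thm:primefactor}--\ref{thm:roots} is uniformity for a \emph{single} large shift; it does not by itself give you anything extra from the average over $y$, and Cauchy--Schwarz plus a generic large sieve will not recover this loss. The appeal to half-integral weight forms is also off-track: the paper works entirely with the $\SL_2(\R)$ kernel framework of \cite{GMtechnical}, not with a Shimura/Kohnen correspondence.

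The missing idea is to exploit the \emph{cubic} structure of the discriminant. The paper observes that $by^3 = (by)\cdot y^2$, and in Lemma~\ref{lem:paracube} shows that passing from discriminant $by$ to discriminant $by^3$ is exactly the action of the Hecke orbit set $H_y$ on symmetric matrices. This recasts the $y$-sum as a Hecke operator $\mathcal{T}_{y,1}$ applied to the kernel for discriminant $by\asymp X^{1/3}$, and Theorem~\ref{thm:technical} is stated precisely to accommodate such a Hecke-twisted discrepancy, with the $y$-average absorbed into the $H^{1/2}(\sum_h|\beta(h)|^2\langle\cdots\rangle)^{1/2}$ factor. After this reduction the Heegner kernel $K_2$ involves discriminants of size $B$ rather than $B^3$, and the bounds of Propositions~\ref{prop:heegner} and~\ref{prop:lowertriang} then land exactly on $X^{5/6}$ at $D=X^{1/4}$ (see the proof of Proposition~\ref{prop:typeI2tech}). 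Without this Hecke-operator step your argument will not reach the required level of distribution.
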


\subsection{Uniform equidistribution estimates}
By sieve methods, distribution of prime numbers may be reduced to so-called Type I and Type II sums.  The main technical results of this paper are the following two theorems that provide highly uniform asymptotics for the distribution of the roots of quadratic congruences. Theorems \ref{thm:primefactor} and \ref{thm:roots} are quick consequences of these by the sieve arguments in \cite{mlargepf} and \cite{DFIprimes}. It is convenient for us to define $A \precprec B$ to mean $|A| \leq X^{o(1)} B$. We let $\theta$ denote the best exponent towards the Selberg eigenvalue conjecture, so that by the work of Kim and Sarnak \cite{KimSarnak} $\theta \leq 7/64.$
\begin{theorem}
[Type I estimate] \label{thm:typeI} 
Let $1 \leq D \leq K \leq X^{2}$ and $D \leq X^{1/2}$. Let $h$ be square-free with $1 \leq h \precprec X^{2}$ and let $1 \leq a \precprec 1$ with $\gcd(a,h)=1$. Let $\psi_1,\psi_2:\R \to \C$ be smooth functions supported on $[1,2]$ and $[-1,1]$, resp., such that for all $J \geq 0$ we have $\psi_i^{(J)} \precprec_J 1$. Then
\begin{align*}
    \sum_{d \leq D}\bigg| \sum_{k \equiv 0 \pmod{d}} \psi_1\Bigl(\frac{k}{K}\Bigr) \Bigl(&\sum_{a \ell^2 +h \equiv 0 \pmod{k}} \psi_2\Bigl(\frac{\ell}{X}\Bigr)   - \frac{\varrho_{a,h}(k) }{k} X \int_\R \psi_2(u) \d u \Bigr) \bigg|\\
   & \precprec \,  D^{1/2}X^{1/2}(D^{1/2}+h^{1/4}) \Bigl(1+\frac{X}{D(D+h^{1/2})}\Bigr)^\theta.
\end{align*}
\end{theorem}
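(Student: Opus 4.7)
The plan is to apply Poisson summation in $\ell$, interpret the resulting exponential sums as Salié-type sums along the roots of the quadratic congruence, and then invoke the spectral machinery of \cite{GMtechnical}. For each divisor $d$ and each $k$ with $d\mid k$, splitting the inner sum over $\ell$ into residue classes $\ell\equiv\nu\pmod k$ with $a\nu^2+h\equiv 0\pmod k$ and applying Poisson gives
$$\sum_{\ell\equiv\nu(k)}\psi_2(\ell/X)=\frac{X}{k}\sum_{m\in\Z}\hat\psi_2\!\Bigl(\frac{mX}{k}\Bigr)e\!\Bigl(\frac{m\nu}{k}\Bigr).$$
Summed over the $\varrho_{a,h}(k)$ roots, the $m=0$ term is precisely the required main term, while the contribution of $m\ne 0$ leaves us to bound
$$\sum_{d\le D}\Bigl|\sum_{d\mid k}\psi_1(k/K)\frac{X}{k}\sum_{m\ne 0}\hat\psi_2(mX/k)\,S_{a,h}(m;k)\Bigr|,$$
where $S_{a,h}(m;k)=\sum_{\nu\pmod k,\,a\nu^2+h\equiv 0}e(m\nu/k)$ is the Weyl sum along the roots.

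Next, completing the square parametrises the roots by $\mu\pmod{2k}$ with $\mu^2\equiv -4ah\pmod{4ak}$, so after isolating the degenerate primes dividing $2ah$ the Weyl sum $S_{a,h}(m;k)$ becomes a genuine Salié sum in $\mu$. Rapid decay of $\hat\psi_2$ restricts us to $|m|\precprec K/X$, leaving a weighted average of Salié sums over $k\sim K$ with $d\mid k$, further averaged over $d\le D$. Feeding this average into the Kuznetsov-formula-based estimate of \cite{GMtechnical} produces the stated bound: Cauchy--Schwarz in $d$ costs a factor $D^{1/2}$; the length of the completed sum gives $X^{1/2}$; the Weyl bound on a single Salié sum yields a factor whose square-root scale is $D^{1/2}+h^{1/4}$, with $D$ arising from the divisibility constraint and $h^{1/2}$ from the discriminant $4ah$; and the Selberg-type spectral input provides the transition factor $(1+X/(D(D+h^{1/2})))^\theta$, measuring how far below square-root cancellation the exceptional-eigenvalue contribution lies in the relevant Kloosterman-sum window.

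The main obstacle will be matching the post-Poisson expression to the precise input format demanded by the black-box estimate of \cite{GMtechnical}. The moduli with $\gcd(k,2ah)>1$ must be separated by a preliminary multiplicative decomposition and handled by elementary means; the Salié sums must be opened into the twisted-Kloosterman shape to which the Kuznetsov formula applies; and enough smoothness of the test function must be preserved so that the spectral weights satisfy the analytic hypotheses with only polylogarithmic losses. A further delicacy is the regime $h^{1/2}>D$, where the effective conductor in the spectral estimate is governed by the discriminant rather than the divisibility modulus; the two regimes must be interpolated to yield the single clean factor $D^{1/2}+h^{1/4}$ together with the correct transition parameter $D(D+h^{1/2})$ inside the $\theta$-power.
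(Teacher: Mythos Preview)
Your proposal describes the classical Hooley/Deshouillers--Iwaniec route (Poisson in $\ell$ $\to$ Weyl/Sali\'e sums $\to$ Kuznetsov-type spectral large sieve), but this is \emph{not} how the paper proves the theorem, and more importantly the black box you invoke does not have the shape you assume. The result imported from \cite{GMtechnical} (Theorem~\ref{thm:technical} here) is not a Kuznetsov estimate for sums of Kloosterman or Sali\'e sums; its input is a pair of compactly supported linear functionals $\alpha_1,\alpha_2$ on $\SL_2(\R)$ together with a smooth $F$ on $\SL_2(\R)$, and it bounds the weighted discrepancy $\langle \alpha_1|\Delta_q F|\alpha_2\rangle$ in terms of two new kernels $\langle \alpha_i|\Delta_q k_{Z_i^2,R}|\alpha_i\rangle$. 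There is no step at which the paper produces a completed exponential sum to which a spectral large sieve is applied. Instead, following Duke--Friedlander--Iwaniec, the inner sum over $(k,\ell)$ is parametrised directly by symmetric matrices of determinant $ah$ (Lemma~\ref{lem:para}), giving $\sum_{k,\ell}(\cdots)=\langle I|\mathcal K_{ad}F|\alpha_{d,a,h}\rangle$ with $\alpha_{d,a,h}$ an average over Heegner points. One then applies Theorem~\ref{thm:technical}, Cauchy--Schwarz in $d$, and bounds the two resulting kernel averages by Propositions~\ref{prop:heegner} and~\ref{prop:lowertriang}. The factor $D^{1/2}+h^{1/4}$ and the transition $D(D+h^{1/2})$ are not Weil-type outputs at all: they fall out of the choice $Z_1=D$, $Z_2=1+Dh^{-1/2}$ which balances the diagonal and off-diagonal contributions in those propositions, where the key saving comes from absorbing the $d$-sum by a divisor bound after identifying a nonvanishing polynomial $F(w_1,z_2)\equiv 0\pmod q$ in physical space.

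The gap is therefore structural: you cannot ``feed the Sali\'e average into \cite{GMtechnical}'' because the input format is incompatible, and the paper explicitly notes (Section~\ref{sec:outline}) that the classical Kloosterman/Kuznetsov route leaves the coefficients $a_{n,d}$ entangled with the level $d$ in a way that the existing spectral large sieve bounds (Deshouillers--Iwaniec, de la Bret\`eche--Drappeau, Pascadi) do not fully exploit; this is precisely why earlier unconditional exponents were weaker. Your reverse-engineered explanation of where $D^{1/2}+h^{1/4}$ comes from (``Weyl bound on a single Sali\'e sum'') is not correct; the Weil bound on a sum to modulus $k\sim K$ gives $K^{1/2}$, which is the wrong scale. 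To repair the argument you would need to abandon Poisson and instead set up the symmetric-matrix parametrisation, identify the correct functionals, and prove the two kernel bounds via the diagonal/off-diagonal divisor-switching argument.
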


\begin{theorem}[Type II estimate]  \label{thm:typeII}
Let $M \geq N$ with $MN \geq X$ and $M \leq X$.  Let $h$ be square-free with $1 \leq h \precprec X^{2}$ and let $1 \leq a \precprec 1$ with $\gcd(a,h)=1$. Let $\psi$ be a smooth function supported on $[-1,1]$ such that for all $J \geq 0$ we have $\psi^{(J)} \precprec_J 1$.  Suppose that $\alpha_m,\beta_n$ are bounded coefficients with $\beta_n$ supported on square-free integers. Then
\begin{align*}
    \sum_{\substack{m\sim M }} &\sum_{n \sim N} \alpha_m \beta_n  \Bigl(\sum_{a \ell^2 +h \equiv 0 \pmod{mn}} \psi\Bigl(\frac{\ell}{X}\Bigr)   - \frac{\varrho_{a,h}(mn) }{mn}  X \int_\R \psi(u) \d u\Bigr) \\
   & \precprec \, M^{1/2} X^{1/2}  + M^{1/4}N^{1/2} X^{1/2} (N^{1/2} + h^{1/8} )\bigg(1+ \frac{X}{M^{1/2} N(N+h^{1/4}) }\bigg)^\theta.
\end{align*}
\end{theorem}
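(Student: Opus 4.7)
The plan is to remove the outer coefficients by Cauchy--Schwarz in $m$, apply Poisson summation to the inner $\ell$-sum, and then invoke the spectral bound from \cite{GMtechnical} on the resulting averages of Kloosterman/Sali\'e-type sums attached to roots of the quadratic congruence. Writing
\[
D_{m,n} := \sum_{a\ell^2+h\equiv 0 \pmod{mn}} \psi(\ell/X) - \frac{\varrho_{a,h}(mn)}{mn} X \int_\R \psi(u)\,\d u,
\]
Cauchy--Schwarz in $m$ bounds the square of the target by
\[
M \sum_{n,n'\sim N}\beta_n\bar\beta_{n'} \sum_{m\sim M} D_{m,n}\bar D_{m,n'}.
\]
The diagonal contribution from $n=n'$ can be estimated by a direct second-moment argument (opening $|D_{m,n}|^2$ and using Poisson to detect the congruence conditions) to yield $\precprec MX$, which after the square root produces the first term $M^{1/2}X^{1/2}$ in the stated bound.

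For the off-diagonal $n\neq n'$ I open $D_{m,n}$ by summing over the roots $\nu \pmod{mn}$ and applying Poisson summation to the sum over $\ell$. The frequency $c=0$ cancels the main term exactly, and rapid decay of $\hat\psi$ truncates the dual variable to $|c|\precprec mn/X$. The Chinese Remainder Theorem splits $\nu$ into components modulo $m$ and modulo $n$, producing exponential phases of the shape $e(c\bar n\,\nu_m/m+c\bar m\,\nu_n/n)$. After multiplying the expressions for $D_{m,n}$ and $D_{m,n'}$ and performing reciprocity on the $m$-phase, the off-diagonal reduces to bounding an average over $m\sim M$ of sums
\[
\sum_{\substack{\mu \pmod{m}\\ a\mu^2+h\equiv 0}} e\Bigl(\frac{\lambda \mu}{m}\Bigr),
\]
twisted by factors attached to the roots modulo $n$ and modulo $n'$. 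This is precisely the kind of average that the main technical theorem of \cite{GMtechnical} is designed to estimate.

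Applying that input provides square-root cancellation in the $m$-variable together with an explicit treatment of exceptional Laplace eigenvalues. The polynomial-in-$h$ term $h^{1/8}$ reflects the conductor of the associated half-integral weight theta form, the factor $N^{1/2}$ comes from the range of dual Poisson frequencies attached to $n$, and the exponent $\theta$ governs the worst-case exceptional contribution at spectral parameter up to the conductor $X/(M^{1/2}N(N+h^{1/4}))$. The main obstacle is this last matching step: one must unfold the CRT-split exponential sums into a Kuznetsov-compatible form and track all conductors so that the $\theta$-loss comes out with the stated balanced shape rather than a weaker one. Once this matching is accomplished, summing dyadic ranges of $c$ and $c'$ and combining with the diagonal contribution yields the claimed bound; uniformity in $h\precprec X^2$ is preserved because the input of \cite{GMtechnical} is proved with polynomial control in the shift.
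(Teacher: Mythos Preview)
Your high-level plan---Cauchy--Schwarz in $m$ to eliminate $\alpha_m$, then a spectral bound on the resulting bilinear form in $n,n'$---matches the paper's overall architecture, and your diagonal treatment is morally fine (the paper takes the diagonal as $\ell_1=\ell_2$ rather than $n=n'$, but either way one lands on the contribution $M^{1/2}X^{1/2}$). The off-diagonal step, however, has a genuine gap.

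You propose to open $D_{m,n}$ by Poisson summation in $\ell$, CRT-split the phases, and then feed averages of the shape $\sum_{a\mu^2+h\equiv 0\,(m)} e(\lambda\mu/m)$ into the main result of \cite{GMtechnical}. But that is not what \cite{GMtechnical} provides. The technical theorem imported here (Theorem~\ref{thm:technical}) bounds a \emph{weighted average discrepancy} $\langle \alpha_1|\Delta_q F|\alpha_2\rangle$ for linear functionals $\alpha_i$ on $\SL_2(\R)$ in terms of new automorphic kernels $\langle\alpha_i|\Delta_q k_{Z_i,R}|\alpha_i\rangle$; it is a physical-space lattice-point estimate, not a Kloosterman/Kuznetsov bound for root-twisted exponential sums. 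The paper explicitly contrasts its method with the Poisson--Kloosterman--spectral-large-sieve route you sketch (end of Section~\ref{sec:outline}): with level-dependent coefficients the classical spectral large sieve does not preserve the full divisor-switching symmetry, and prior work along those lines yields strictly weaker dependence on~$\theta$. Your admitted ``main obstacle''---matching the CRT-split sums to a Kuznetsov-compatible form with the stated balanced $\theta$-loss---is precisely the step that cannot be completed with the input you name.

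What the paper actually does after Cauchy--Schwarz (applied on $t,m,\ell_0$ after inserting an auxiliary modulus $Q_{m,t}$, not merely on $m$) is to interpret the off-diagonal as a sum over \emph{pairs of symmetric matrices} sharing the top-left entry and a congruence on the off-diagonal entry. Lemma~\ref{lem:para2} parametrizes such pairs by a single $\gf\in\mathrm{S}_{a,h}(sn_1n_2)$ translated by lower-triangular unipotents $\n[x]^t$. This recasts the problem as $\langle\beta|\Delta_{asn_1n_2}F|\alpha\rangle$ with $\alpha$ a Heegner-point functional and $\beta$ a lower-triangular-orbit functional; Theorem~\ref{thm:technical} then reduces everything to the two self-kernels, which are bounded in Propositions~\ref{prop:heegner} and~\ref{prop:lowertriang} by absorbing the sum over the level $n_1n_2$ via divisor bounds. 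None of this machinery---the matrix parametrization, the lower-triangular functional, or the kernel estimates---appears in your sketch. The factors $h^{1/8}$ and the $\theta$-weight come not from a half-integral theta conductor but from the choice $Z_2=1+N^2h^{-1/2}$ in Proposition~\ref{prop:heegner} and $Z_1=MN^2/X$ in Proposition~\ref{prop:lowertriang}.
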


For $h \precprec D^2$ and $h \precprec N^4$, the Type I and Type II bounds respectively simplify to
\begin{align*}
 D X^{1/2}   \Bigl(1+\frac{X}{D^2}\Bigr)^\theta \quad  \text{and}\quad M^{1/2} X^{1/2}  + M^{1/4} N  X^{1/2} \bigg(1+ \frac{X}{M^{1/2} N^2 }\bigg)^\theta,
\end{align*}
which are independent of $h$. In the critical ranges for the proof of Theorem \ref{thm:primefactor} we have precisely $D \leq X^{1/2-o(1)}$ and $N \leq X^{1/4-o(1)}$, which ultimately allows us to take $h \leq X^{1+o(1)}$, and the bounds are then non-trivial as long as we have a spectral gap $\theta < 1/2$. Our Type I and II estimates remain non-trivial also for larger $h < X^2$ and one could in this range lower bound the size of the largest prime divisor by $X^{\varpi}$  for some $\varpi = \varpi(h) > 1$ with $\varpi \to 1$ as $h \to X^2$. We do not pursue this here further. 

\subsection{Outline of the proof} \label{sec:outline}
We consider Theorem \ref{thm:primefactor} for $a=1$ in this sketch. Similar to the work of the second author \cite{mlargepf}, a combination of Chebyshev's method and Harman's sieve leads us to consider the distribution of the roots of $\ell^2+h \equiv 0 \pmod{k}$ in a narrow window $\ell \sim X$  in $\Z/k \Z$ with $X < K$. More precisely, we want to evaluate asymptotically
\begin{align*}
&(\text{Type I sums})  &\sum_{d \sim D} \lambda_d \sum_{\substack{k \sim K \\ k \equiv 0 \pmod{d}}} \sum_{\substack{\ell \sim X \\ \ell^2+h \equiv 0 \pmod{k}}} 1 \quad \text{and} \\
&(\text{Type II sums}) &\sum_{m \sim M} \alpha_m \sum_{\substack{n \sim N}} \beta_n \sum_{\substack{\ell \sim X \\ \ell^2+h \equiv 0 \pmod{mn}}} 1, \quad  \quad
\end{align*}
for as wide range of $D$ and $N$ as possible, see Theorems \ref{thm:typeI} and \ref{thm:typeII}.  The case of Type II sums may be reduced to a more complicated variant of the Type I sums by an application of Cauchy-Schwarz, and so we shall first focus on Type I sums in this sketch. 

We note that all previous results only consider bounded $h.$ Hooley \cite{hooley} estimated the Type I sums by using the Gauss correspondence to connect them to Kloosterman sums and bounding them with Weil bound. Combining this with an upper bound sieve, Hooley  obtained the exponent 1.1 in Theorem \ref{thm:primefactor}. Deshouillers and Iwaniec \cite{DIprimefactor} improved this to 1.2 by making use of sums of Kloosterman sums \cite{deshoullieriwaniec}, which was improved to 1.2182 by de la Bretèche and Drappeau \cite{BD} by sharpening the dependency on exceptional eigenvalue. In the work of the second author \cite{mlargepf} Harman's sieve and Type II sums were introduced to the proof, improving  the exponent to 1.279 unconditionally and 1.312 conditional to Selberg's eigenvalue conjecture. Finally, the dependency on the exceptional eigenvalue was recently improved by Pascadi \cite{pascadi2024large} who achieved 1.3. The last two results assume that $h=1$.

In this article, we will take a different approach to estimating such sums, originating in the work of Duke, Friedlander, and Iwaniec \cite{DFIprimes}. Following their approach, we consider symmetric matrices $\gf$ of determinant $h$
\begin{align*}
\det( \gf)= \det \mqty(m & \ell \\ \ell & k) = mk -\ell^2 = h.
\end{align*}
The modular group $\SL_2(\Z)$ acts on symmertic matrices via $\gf \mapsto \gamma \gf \gamma^t$, and the congruence $k \equiv 0 \pmod{d}$ is preserved by the Hecke congruence subgroup $\Gamma_0(d)$. For a fixed modulus $d$ the sum can be parametrized as an average over a Poincaré type series with a smooth weight $F$ encoding $k \sim K$ and $\ell \sim L$, that is,
\begin{align*}
    \sum_{\substack{k \sim K \\ k \equiv 0 \pmod{d}}} \sum_{\substack{\ell \sim X \\ \ell^2+h \equiv 0 \pmod{k}}} 1 = \sum_{z \in \Lambda_h} \sum_{\tau \in  \Gamma_0(d) \backslash \SL_2(\Z)} \alpha_{d,h}(\tau z)\sum_{\gamma \in \Gamma_0(d)} F(\gamma \tau z).
\end{align*}
Here $z$ ranges over the Heegner points $\Lambda_h$ in the upper half-plane $\H$ and for each $z$ the weight $\alpha_{d,h}(\tau z)$ restricts $\tau$ to $d^{o(1)}$ cosets. In \cite{DFIprimes} such sums are estimated by using the spectral theory of automorphic forms for each fixed $\tau$, $z$, and $d$ separately. This is costly if $h$ is large since $\#\Lambda_h = h^{1/2+o(1)}$. Therefore, to get uniformity in $h$, we make use of the averages over $z$, $\tau$, as well as $d$.

In \cite{GMtechnical} the authors proved a result (cf. Theorem \ref{thm:technical} below) that allows one to count sums of this shape. More generally, given a smooth function $F:\SL_2(\R) \to \C$ and functions $\alpha_1,\alpha_2:\SL_2(\R) \to \C$ supported on some finite sets, we can estimate the \emph{weighted average discrepancy} $\langle \alpha_1 | \Delta_d F | \alpha_2 \rangle $ defined by
\begin{align*}
\langle \alpha_1 | \Delta_d F | \alpha_2 \rangle = \sum_{\tau_1,\tau_2} \overline{\alpha(\tau_1)}\alpha(\tau_2) \Bigg(\sum_{\gamma \in \Gamma_0(d)} F(\tau_1^{-1} \gamma \tau_2)  - \frac{1}{|\Gamma_0(d) \backslash \SL_2(\R)|} \int_{\SL_2(\R)} F(\g )  \d \g \Bigg).
\end{align*}
Note that we have to normalize the symmetric matrices $\gf$ by a factor of $h^{-1/2}$ to get determinant $1$. Letting $\alpha_1=I$ denote the point weight at the identity $\smqty(1 & \\ & 1)$,  Theorem \ref{thm:technical} then says that for any choice of $Z_0Z_1Z_2 = X h^{-1/2}$ we have
\begin{align} \label{eq:introtechnical}
    \langle I | \Delta_d F | \alpha_{d,h} \rangle \precprec (X h^{-1/2})^{1/2} Z_0^{\theta} \sqrt{  \langle I | \Delta_d k_1 | I \rangle  \langle \alpha_{d,h}  | \Delta_d k_2 | \alpha_{d,h} \rangle},
\end{align}
where $k_i:\SL_2(\R) \to [0,1]$ are certain functions supported on matrices of size $Z_i^2$. Applying Cauchy-Schwarz on $d$ we get
\begin{align*}
   \sum_{d \sim D} \bigl|\langle I | \Delta_d F | \alpha_{d,h} \rangle \bigr| \precprec (X h^{-1/2})^{1/2} Z_0^{\theta} \sqrt{    \sum_{d \sim D}  \langle I | \Delta_d k_1 | I \rangle    \sum_{d \sim D}  \langle \alpha_{d,h}  | \Delta_d k_2 | \alpha_{d,h} \rangle},
\end{align*}
and the goal is to bound the right-hand side by absorbing the sum over $d$ via a divisor bound. We emphasise that in this argument the existing divisor switching symmetry for the $d$ variable is fully preserved throughout the application of spectral methods.  This is the reason that ultimately allows us to remove the dependency on exceptional eigenvalues here as well as for the Type II sums.

As usual after Cauchy-Schwarz, we need to consider diagonal and off-diagonal contributions when bounding the terms $ \sum_d \langle \alpha_{i}  | \Delta_d k_i | \alpha_{i} \rangle$. The new kernels $\langle \alpha_{i}  | \Delta_d k_i | \alpha_{i} \rangle$ may be written explicitly in terms of a sum over certain pairs of integer matrices $(\g_1,\g_2)$ with a congruence condition of the shape 
\begin{align}\label{eq:introcong}
    F(\g_1,\g_2) \equiv 0 \pmod{d},
\end{align}
for some polynomial $F$ in the entries of the matrices $\g_1$ and $\g_2$. The diagonal is then given by the set of pairs $(\g_1,\g_2)$ where $F(\g_1,\g_2)  = 0$ over $\Z$. There the congruence \eqref{eq:introcong} is fulfilled trivially and the sum over $d$ results in a factor of $D$. However, we can make use of the fact that the diagonal is a sparse subset. In the off-diagonal, the congruence condition \eqref{eq:introcong} is non-trivial and we can use a divisor bound to absorb the $d$ summation. For the Type I sums we are then able to show that essentially (cf. Propositions \ref{prop:heegner} and \ref{prop:lowertriang})
\begin{align*}
     \sum_{d \sim D}  \langle \alpha_{d,h}  | \Delta_d k_2 | \alpha_{d,h} \rangle &\precprec D h^{1/2} + h Z_2 \\
      \sum_{d \sim D}  \langle I | \Delta_d k_1 | I \rangle & \precprec D + Z_1 +\frac{K}{X}.
\end{align*}
Choosing $Z_1=D$ and $Z_2 = D h^{-1/2}$ balances the terms and gives (for $h \ll D^2$)
\begin{align*}
     \sum_{d \sim D} \alpha_d \langle I | \Delta_d F | \alpha_{d,h} \rangle \precprec D X^{1/2} \Bigl( 1+ \frac{X}{D^2} \Bigr)^\theta.
\end{align*}
This is non-trivial in the range $D < X^{1/2}$. It is noteworthy that, while in the new kernel we subtract the main term $\int (\cdots) \d g$ by the definition of $\Delta_d$, we do not make any use of this and simply drop the integral by positivity. The new ranges $Z_i^2$ are too short for us to show cancellation.

For the Type II sums, we apply Cauchy-Schwarz to smoothen $m$, which gives us a sum over pairs of symmetric matrices with the same top left entry $m$, and congruence conditions modulo $n_1$ and $n_2$, respectively. We give a parametrisation for such pairs in Lemma \ref{lem:para2}.  In place of $\alpha_1=I$  we then need to consider more general weights $\beta_{n_1,n_2}$ that are sums over lower triangular matrices.  Roughly speaking, for $\beta_{n_1,n_2}$ being the indicator of matrices $\smqty( 1 & \\ x & 1)$ with $x = n_1v\,\overline{n_1}$ for $v \leq V:=X/M$ and $n_1 \overline{n_1} \equiv 1 \pmod{n_2}$, we need to bound weighted average discrepancy of the shape
\begin{align*}
 \sum_{\substack{ n_1,n_2 \sim N \\ \gcd(n_1,n_2)=1}}  \langle \beta_{n_1,n_2} | \Delta_{n_1n_2} F | \alpha_{h,n_1n_2} \rangle.
\end{align*}
This is achieved by Theorem \ref{thm:technical} combined with Propositions \ref{prop:heegner} and \ref{prop:lowertriang}, where the kernel associated to $\alpha_{h,d}$ is bounded similarly to the Type I case. For practical purposes we get for the other kernel
\begin{align*}
     \sum_{\substack{ n_1,n_2 \sim N \\ \gcd(n_1,n_2)=1}} \langle \beta_{n_1,n_2} | \Delta_{n_1n_2} k_1 | \beta_{n_1,n_2} \rangle \precprec   V N^2+V^2 Z_1^{1/2}.
\end{align*}
Choosing $Z_1= N^2/V^2$, for $MN=X^\alpha$ this suffices for the Type II range $X^{\alpha-1} < N < X^{(2-\alpha)/3}$. This range was obtained in \cite{mlargepf} under Selberg's eigenvalue conjecture and for the fixed choice $h=1$.

To compare with the classical approach, if we applied sums of Kloosterman sums and the Kuznetsov formula, we would have to provide an exceptional spectral large sieve bound for
\begin{align*}
  \sum_{d \leq D}  \sum_{\lambda_j=1/4-\nu_j^2}^{\Gamma_0(d)} Z^{2\nu_j} \bigg| \sum_{n \leq N} a_{n,d}  \varrho_j (n)\bigg|^2.
\end{align*}
Here the  coefficients $a_{n,d}$ are intricately entangled with the level $d$, especially in the case of Type II sums. In \cite{mlargepf} the results of Deshouillers and Iwaniec \cite{deshoullieriwaniec} and de la Bretèche and Drappeau \cite{BD} were used as a black box and the coefficients $a_{n,d}$ are essentially treated as arbitrary in some cases. For coefficients $a_{n,d}$ which do not depend on the level $d$, Deshouillers and Iwaniec \cite{deshoullieriwaniec} improve the dependency on the exceptional eigenvalue by a divisor switching argument for the sums of Kloosterman sums. However, our divisor switching completely in physical space turns out to be more effective. Recently, Pascadi \cite{pascadi2024large} improved the spectral large sieve for a fixed modulus when $a_{n,d}$ are  exponential phases $e(n \theta_d)$, or more general dispersion type coefficients. From the perspective of the spectral large sieve, our application of \cite{GMtechnical} allows us to fully capture the entanglement of $a_{n,d}$ with the level $d$.

We have restricted to the case of a negative discriminant, that is, $a,h \geq 1$. It would be interesting to consider positive discriminants, combining our ideas with the approach in \cite{toth}.

\subsection{Notations}
We follow standard asymptotic Vinogradov and Landau notations. It is convenient for us to define $A \precprec B$ to mean $|A| \leq X^{o(1)} B$, where $X$ is the largest scale in the given context. For example, for $n \leq X$ we will frequently use the divisor bound $d(n) \precprec 1$.

For matrices we use the notations from \cite{GMtechnical}. We let $\G = \SL_2(\R)$ and denote the subgroups
\begin{align*}
    \mathrm{N} :=& \bigg\{ \mathrm{n}[x] =  \mqty(1&x \\ & 1) : x \in \R  \bigg\}  \\
      \mathrm{A} := & \bigg\{ \mathrm{a}[y]= \mqty(\sqrt{y }& \\ & 1/\sqrt{y}) : y \in (0,\infty)  \bigg\} \\
        \mathrm{K} := &\bigg\{ \mathrm{k}[\theta] =  \mqty(\cos \theta & \sin \theta \\ -\sin \theta & \cos \theta) : \theta \in \R / 2\pi \Z \bigg\}  
\end{align*}
We denote $\g = \smqty(a & b \\ c& d) \in \G$ and  use fraktur symbols for symmetric matrices $\gf = \smqty( \aa & \bb \\ \bb& \cc)$.

\section{Technical result}
We now import a technical result from \cite{GMtechnical} that bounds the weighted average discrepancy as in \eqref{eq:introtechnical}. Recall the Hecke congruence subgroup of level $q \geq 1$
\begin{align*}
 \Gamma= \Gamma_0(q)  := \Bigl\{ \mqty(a & b \\ c& d) \in \SL_2(\Z) : c \equiv 0 \pmod{q} \Bigr\}.
\end{align*}
We let $\Gamma$  act on the Lie group $\G := \SL_2(\R)$ by multiplication from the left $\g \mapsto \gamma \g$.  Given a compactly supported function $F:\G \to \C$ we construct the \emph{automorphic kernel function} $\mathcal{K}_{q}F:\G \times \G \to \C$ via
\begin{align} \label{eq:kerneldef}
      ( \mathcal{K}_{q} F)(\tau_1,\tau_2)&=\sum_{\gamma \in \Gamma}  F(\tau_1^{-1} \gamma \tau_2).
\end{align}

If $F$ is a smooth bump function and its support is large  and not too skewed compared to the level $q$, we expect that the sum over $\Gamma$ is well approximated by the corresponding integral over $\G$ against the Haar measure $\d \g$, normalized by the volume of the fundamental domain $\Gamma \backslash \G$. We then define the local discrepancy
\begin{align*}
  \Delta_q F (\tau_1,\tau_2)  &:=    \mathcal{K}_qF(\tau_1,\tau_2) - \frac{1}{|\Gamma_0(q) \backslash \G|} \int_\G F( \g ) \d \g.
\end{align*}

We let $\mathcal{L}(\G)$  denote the set of linear functionals $C(\G) \to \C$. For $\alpha \in \mathcal{L}(\G)$ we will denote the image of $ f \in C(\G)$ by $\langle f\rangle_\alpha  := \alpha(f) \in \C$. 
\begin{definition}[Compactly supported linear functional]
We define the set of compactly supported linear functionals via
\begin{align*}
    \mathcal{L}_c(\G) := \bigcup_{K \subseteq \G \, \text{compact}} \bigcup_{c>0}\Bigl\{\alpha \in \mathcal{L}(\G):     |  \langle f \rangle_\alpha| \leq c \sup_{\g \in K}  |f(\g)| \text{ for all } f \in C(\G)\Bigr \}.
\end{align*}
\end{definition}
In this paper we are interested only in functionals given by weighted sums $f \mapsto \sum_{\tau \in T}  \alpha(\tau)f(\tau)$ for some finite set $T \subseteq 
 \G$ and weight $\alpha:T \to \C$. For any $\alpha \in \mathcal{L}(\G)$ we define the complex conjugate by 
\begin{align*}
  \langle f \rangle_{\overline{\alpha}} =  \overline{ \langle \overline{f} \rangle_{\alpha} } .
\end{align*}
\begin{definition}[Sesquilinear form induced by a binary function]
For any  $F \in C(\G \times \G)$ we define the sesquilinear form  $\mathcal{L}_c(\G) \times \mathcal{L}_c(\G) \to \C$ 
\begin{align*}
 \langle \alpha_1 | F | \alpha_2 \rangle := \langle \langle F \rangle_{\overline{\alpha_1}} \rangle_{\alpha_2},
\end{align*}
where $\overline{\alpha_1}$ acts on the left variable of $F(\g_1,\g_2)$ and $\alpha_2$  acts on the right variable.
\end{definition}
Restriction to $\mathcal{L}_c(\G)$ means that this is well-defined, that is, $\langle F \rangle_{\overline{\alpha_1}}$ defines a continuous function in the second variable, and we have $\langle F \rangle_{\overline{\alpha_1}} \rangle_{\alpha_2} = \langle \langle F \rangle_{\alpha_2} \rangle_{\overline{\alpha_1}},$  where in both expressions $\overline{\alpha_1}$ acts on the left variable and $\alpha_2$  acts on the right variable. This may be viewed as a tensor product of $\overline{\alpha_1}$ and $ \alpha_2$, that is, for $F(\tau_1,\tau_2) =\overline{f_1(\tau_1)} f_2(\tau_2)$ we have
\begin{align*}
   \langle \alpha_1 | F | \alpha_2 \rangle =  \overline{\langle f_1\rangle_{\alpha_1}}   \langle f_2\rangle_{\alpha_2}.
\end{align*}

For the proof of Theorem \ref{thm:x2y3} we require Hecke operators, for Theorems \ref{thm:primefactor} and \ref{thm:roots} this is not needed. For any integer $h \geq 1$ we denote the set of Hecke orbits
\begin{align}
    \label{eq:Hecke} H_h = \Bigl\{ \mqty(e & f \\ & g): \quad eg = h, \quad f \in \Z/g \Z \Bigr\}
\end{align}
and recall that the set of all integer matrices with determinant $h$ may be parametrised by $\SL_2(\Z)$ as a disjoint union
\begin{align*}
    \mathrm{M}_{2,h}(\Z) = \bigsqcup_{\sigma \in H_h}\SL_2(\Z) \sigma =  \bigsqcup_{\sigma \in H_h} h \sigma^{-1}\SL_2(\Z).
\end{align*}
We then define the Hecke operator acting on $f:\G \to \C$ by
\begin{align} \label{eq:heckeopdef}
    \mathcal{T}_h f(g) :=  \frac{1}{\sqrt{h}} \sum_{\sigma \in H_h} f\bigl(\tfrac{1}{\sqrt{h}}\sigma g\bigr).
\end{align} 
For any function $f:\G\times \G\to \C$ we define the two variables Hecke operators
\begin{align*}
    (\mathcal{T}_{h_1,h_2} f)(\g_1,\g_2)= (\mathcal{T}_{h_1})_{\g_1}(\mathcal{T}_{h_2})_{\g_2} f(\g_1,\g_2),
\end{align*}
where $(\mathcal{T}_{h})_{\g_i}$ is the Hecke operator $\mathcal{T}_{h}$ acting on the $\g_i$ coordinate.

We define  for any $R > 0$  and  $\g = \smqty(a &b \\ c& d) \in \G$ the $R$-skewed  hyperbolic size 
\begin{align*}
    u_R(\g)  := \tfrac{1}{4} (a^2+(b/R)^2+(cR)^2+d^2-2).
\end{align*}
We denote  $\theta=\theta(\Gamma) := \max\{0,\Re(\sqrt{1/4-\lambda_1(\Gamma)})\}$ with $\lambda_1(\Gamma)$ denoting the smallest positive eigenvalue of the hyperbolic Laplacian for $\Gamma$. The Selberg eigenvalue conjecture states that
 we have $\theta=0$ and the best unconditional bound for congruence subgroups is $\theta \leq 7/64$ by Kim and Sarnak \cite{KimSarnak}. The following is Theorem 8.1 in \cite{GMtechnical}.
\begin{theorem} \label{thm:technical}
   Let $q \geq 1$ and let $\mathbf{X},\mathbf{Y} > 0$ and $\delta \in (0,1)$ and suppose that
  $\mathbf{X}/\mathbf{Y}  > \delta$.  let $f(x,y)$ be a smooth function, supported on $[-1,1]\times[1,2]$ and satisfying $\partial_x^{J_1} \partial_y^{J_2} f \ll_{J_1,J_2} \delta^{-J_1-J_2}$. Denote $F:\G \to \C, \, F(\smqty(a & b \\ c& d)):= f(\frac{x}{\mathbf{X}},\frac{y}{\mathbf{Y}})$ with $y=1/(c^2+d^2)$ and $x= y (ac+bd)$.   Let $H \geq 1$ and let $\beta(h)$ be complex coefficients supported on $h \leq H$. Let $\alpha_1,\alpha_{2,h} \in \mathcal{L}_c(\G)$.

Then for any choice $Z_0Z_1Z_2 \geq \mathbf{X}/\mathbf{Y}+1$ with $Z_0,Z_1,Z_2 \geq 1$ we have 
\begin{align*}
 \sum_{\substack{\gcd(h,q)=1}}\beta(h)  \langle \alpha_1 | \mathcal{T}_{h,1}  \Delta_q F | \alpha_{2,h} \rangle \ll & \, q^{o(1)} \delta^{-O(1)}  (\mathbf{X}/\mathbf{Y})^{1/2+o(1)} H^{1/2} Z_0^\theta  \, \langle  \alpha_1| \Delta_q k_{Z_1^2,\mathbf{X}}| \alpha_1\rangle^{1/2} \\
& \times\bigg(  \, \sum_{h \leq H} |\beta(h)|^2\langle  \alpha_{2,h} |\Delta_q k_{Z_2^2,1} | \alpha_{2,h} \rangle \bigg)^{1/2},
\end{align*}
for certain smooth functions $k_{Y,R} :\G \to [0,1]$ which satisfy
\begin{align*}
    k_{Y,R}(\g)  \leq  \frac{\mathbf{1}\{u_R(\g) \leq Y\}}{\sqrt{1+u_R(\g)}}.
\end{align*}
\end{theorem}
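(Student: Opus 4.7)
The plan is to proceed by spectral decomposition on $\Gamma_0(q)\backslash\G$. Expand the point-pair kernel as
\begin{align*}
\mathcal{K}_q F(\tau_1,\tau_2) = \sum_{\varphi} \hat F(\varphi)\,\varphi(\tau_1)\overline{\varphi(\tau_2)} + \textnormal{Eisenstein},
\end{align*}
where $\varphi$ runs over an orthonormal basis of the cuspidal spectrum and $\hat F(\varphi)$ is the appropriate Selberg/Harish-Chandra transform reading off the $K$-type content of $F$. The constant eigenfunction contribution cancels the integral main term in $\Delta_q F$, so the trivial eigenfunction drops out. For $\gcd(h,q)=1$ the Hecke operator $\mathcal{T}_{h,1}$ acts on Hecke eigenforms $\varphi$ by $\lambda_\varphi(h)$, turning the left-hand side of Theorem \ref{thm:technical} into
\begin{align*}
\sum_\varphi \hat F(\varphi)\,\overline{\langle\varphi\rangle_{\alpha_1}}\,\sum_h \beta(h)\lambda_\varphi(h)\langle\varphi\rangle_{\alpha_{2,h}} + \textnormal{Eisenstein}.
\end{align*}

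I would next factor $\hat F(\varphi) = g_0(\varphi) g_1(\varphi) g_2(\varphi)$ so that $g_0$ absorbs the exceptional-spectrum growth, bounded by $Z_0^\theta$ via the Selberg--Kim--Sarnak bound $|\nu_\varphi|\leq\theta$, and $|g_i(\varphi)|^2$ coincides with the spherical transform of a non-negative kernel $k_{Z_i^2,R_i}$ with $R_1=\mathbf{X}$ and $R_2=1$. The split is possible because the typical spectral mass of $\hat F$ sits at $|\nu_\varphi|\leq \sqrt{\mathbf{X}/\mathbf{Y}}$ with $L^\infty$-size of order $\mathbf{X}/\mathbf{Y}$, matching the budget $Z_0Z_1Z_2\geq \mathbf{X}/\mathbf{Y}+1$; the asymmetry $R_1=\mathbf{X}$, $R_2=1$ reflects the asymmetric size constraints on the matrix entries of $\g$ dictated by the support of $F$ in the $(x,y)$ variables.

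Now apply Cauchy--Schwarz in the spectral parameter $\varphi$ to separate the two sides. On the $\alpha_1$-side the non-negativity of $|g_1|^2$ and the reverse pretrace formula identify $\sum_\varphi |g_1(\varphi)|^2 |\langle\varphi\rangle_{\alpha_1}|^2$ with $\langle\alpha_1|\Delta_q k_{Z_1^2,\mathbf{X}}|\alpha_1\rangle$ up to non-negative Eisenstein terms that may be added freely. On the $\alpha_{2,h}$-side, open the inner square in $h$ and use the Hecke multiplicativity $\lambda_\varphi(h_1)\lambda_\varphi(h_2)=\sum_{d\mid(h_1,h_2)}\lambda_\varphi(h_1h_2/d^2)$ together with Cauchy--Schwarz in $h$ to reduce to $H\sum_h |\beta(h)|^2\langle\alpha_{2,h}|\Delta_q k_{Z_2^2,1}|\alpha_{2,h}\rangle$, producing the $H^{1/2}$ factor after taking square roots and yielding the logarithmic $q^{o(1)}$ loss.

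The main obstacle is the construction of the kernels $k_{Y,R}$ satisfying both the pointwise majorisation $k_{Y,R}(\g)\leq\mathbf{1}\{u_R(\g)\leq Y\}/\sqrt{1+u_R(\g)}$ in the $R$-skewed hyperbolic metric and a spectral side realising the desired $|g_i(\varphi)|^2$ while remaining non-negative. This requires inverting the Harish-Chandra transform tracking $K$-types, since $F$ is not $K$-bi-invariant, together with a careful analysis of the Eisenstein contribution and uniformity in the level $q$. These are precisely the delicate technical points resolved in \cite{GMtechnical}.
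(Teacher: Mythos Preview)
This theorem is not proved in the present paper: immediately before the statement the authors write ``The following is Theorem 8.1 in \cite{GMtechnical},'' and no argument is given here. So there is no proof in this paper to compare your proposal against.

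Your sketch is a plausible outline of the spectral approach one expects in \cite{GMtechnical}---pretrace expansion, factoring the spherical transform into three pieces matching the $Z_0Z_1Z_2$ budget, Cauchy--Schwarz in the spectrum, and reassembling via non-negative kernels $k_{Y,R}$---and you yourself flag at the end that the actual construction of the $k_{Y,R}$ and the handling of $K$-types and Eisenstein terms are deferred to \cite{GMtechnical}. That is consistent with how the present paper treats the result: as a black box to be invoked, not proved. If your goal was to reproduce the paper's treatment, the correct move is simply to cite \cite{GMtechnical}; if your goal was to sketch the underlying proof, be aware that the substantive content (the kernel construction with the skewed metric $u_R$ and the uniform control in $q$) lives entirely in that companion paper and cannot be checked against anything here.
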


Even though the original weight function $F$ factorizes through the upper half-plane $\mathcal{H} \cong \G/\K$, using the group $\G$ as the ambient space is illuminating. Indeed, after Cauchy-Schwarz we get two automorphic kernel functions, one that lives properly on $\G$ and one that lives on $\K \backslash \G / \K$.

\section{Parametrisation of  symmetric matrices}
The counting problems we are interested in are related to symmetric matrices with congruence conditions. In this section, building up on  \cite[Section 2]{DFIprimes}, we parametrise these by Hecke congruence subgroups.

\subsection{The base case}
Define the set of symmetric integer matrices with determinant $h$
\begin{align*}
    \mathrm{S}_h := \Bigl\{ \gf=\mqty( \aa  & \bb \\ \bb & \cc) \in \mathrm{M}_{2,h}(\Z): \, \aa,\cc > 0 \Bigr\}
\end{align*}
and define for $a,d\geq 1$, $\gcd(a,h)=1$
\begin{align*}
     \mathrm{S}_{a,h}(d) := \Bigl\{ \mqty( \aa  & \bb \\ \bb & \cc) \in    \mathrm{S}_{ah}: \cc \equiv 0 \pmod{ad}, \, \bb\equiv 0 \pmod{a},\, \aa,\cc > 0 \Bigr\}.
\end{align*}
By dividing $\bb^2-\aa \cc = a h$ throughout by $a$, we see that $\mathrm{S}_{a,h}(d)$ corresponds bijectively to the set 
\begin{align*}
    \Bigl\{ (m,\ell,k) \in \Z^3: a\ell^2 - m k = h, \quad   k \equiv 0 \pmod{d} , \, m,k > 0\Bigr\},
\end{align*}
with $(\aa,\bb,\cc) =(m,a\ell,ak)$. Thus, for the smooth weights that appear in the proofs, we have under this identification $F(m,\ell,k) = F(\aa,\bb/a,\cc/a)$.

We can identify  $\mathrm{S}_h$ with the set of points in the upper half-plane $\mathcal{H}$
\begin{align*}
   \{ z= \frac{\bb + i \sqrt{h}}{\cc} \in \mathcal{H}: \aa,\bb,\cc \in \Z, \, \bb^2+h = \mathfrak{a} \cc \}. 
\end{align*}
In this way, the usual action of $\gamma \in \SL_2(\Z)$ on $\H$ by M\"obius transformations corresponds to the action on $\mathrm{S}_h$ defined by
\begin{align*}
    \gf \mapsto  \gamma\diamond\gf := \gamma \gf \gamma^t.
\end{align*}

We let $\mathcal{F}  \subseteq \H$ denote the standard fundamental domain for $\SL_2(\Z) \backslash \H$. We will denote the set of Heegner points of discriminant $h$ by
\begin{align*}
     \Lambda_h := \bigl\{ z= \frac{\bb + i \sqrt{h}}{\cc} \in \mathcal{F}: \aa,\bb,\cc \in \Z, \, \bb^2+h = \mathfrak{a} \cc \Bigr\}. 
\end{align*} 
Then
\begin{align}\label{eq:Heegnerhull}
     \Lambda_h  \subseteq \Bigl\{\frac{\bb + i \sqrt{h}}{\cc} :\aa,\bb,\cc \in \Z, \,\bb^2+h = \aa \cc, \quad \cc \leq 2 \sqrt{h}, \quad |\bb| \leq \frac{\cc}{2}\Bigr\}.
\end{align}
In particular, this implies that  $\# \Lambda_h  \leq h^{1/2+o(1)}$.
With this, a set of representatives for $\mathrm{S}_h$ acted upon by $\SL_2(\Z)$ is given by $\sqrt{h}\, \sigma \sigma^t$ with
\begin{align*}
  \sigma \in \mathrm{L}_h :=  \Bigl\{ \sigma = \smqty(\ast & \ast \\ 0 &\ast) \in \SL_2(\R): \sigma i \in \Lambda_h  \Bigr\}. 
\end{align*}

For $\tau = \smqty(\ast & \ast \\ c_0 & d_0) \in \SL_2(\Z)$ and $\gf = \smqty(\aa & \bb \\ \bb & \cc) \in \mathrm{S}_h$ we have 
\begin{align} \label{eq:cctransform}
    \cc(\tau \diamond \gf) = c_0^2 \aa + 2 c_0 d_0 \bb + d_0^2 \cc.
\end{align}
For $z= \g i$ corresponding to the symmetric matrix $\gf=\g\g^t$, we denote by abuse of notation $\cc(z) = \cc(\gf)$ and similarly for $\aa$ and $\bb$. 

For any $q \geq 1$ the congruences $\cc \equiv 0 \pmod{aq}$ and $\bb \equiv 0 \pmod{a}$ are fixed by the action of the Hecke congruence subgroup $\Gamma_0(q)$ of level $q=ad$. The above discussion gives the following parametrisation.
\begin{lemma}\label{lem:para}
Let $\gcd(a,h)=1$. Let $\Gamma=\Gamma_0(q)$ with $q=ad$ and let $\Gamma_z$ denote the stabilizer of a point $z \in \H$.  Let $T_q= \Gamma \backslash \SL_2(\Z)$. Then the determinant normalised set $(ah)^{-1/2}\mathrm{S}_{a,h}(d)$ can be written as a disjoint union by
    \begin{align*}
(ah)^{-1/2}\mathrm{S}_{a,h}(d)=\bigsqcup_{\sigma\in \mathrm{L}_{ah}} \bigsqcup_{\substack{\tau\in T_q \\ \cc(\tau \sigma i) \equiv 0 \pmod{q} \\ \bb(\tau \sigma i) \equiv 0 \pmod{a}}}  \Gamma_{\sigma i}\backslash \Gamma \, \tau \sigma \, \diamond \, \smqty(1 & \\ & 1).
    \end{align*}
\end{lemma}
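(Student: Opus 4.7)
The plan is to start from the classical $\SL_2(\Z)$-orbit decomposition of integer symmetric matrices of fixed determinant $ah$, then split by cosets of $\Gamma = \Gamma_0(q)$ and cut out the pieces compatible with the two congruence conditions defining $\mathrm{S}_{a,h}(d)$.

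First, using the identification of $\mathrm{S}_{ah}$ with integer Heegner points of discriminant $ah$ in $\H$ and the fact that $\Lambda_{ah}$ is a complete set of $\SL_2(\Z)$-inequivalent representatives inside the standard fundamental domain $\mathcal{F}$, every $\gf \in \mathrm{S}_{ah}$ can be written as $\gamma \diamond (\sqrt{ah}\,\sigma\sigma^t)$ with a unique $\sigma \in \mathrm{L}_{ah}$ and with $\gamma \in \SL_2(\Z)$ unique up to the stabilizer $\Gamma_{\sigma i}$ of $\sigma i \in \H$. Normalising determinants to $1$ then yields $(ah)^{-1/2}\mathrm{S}_{ah} = \bigsqcup_{\sigma \in \mathrm{L}_{ah}} \Gamma_{\sigma i}\backslash \SL_2(\Z)\,\sigma \diamond \smqty(1 & \\ & 1)$.

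Next, I would split $\SL_2(\Z) = \bigsqcup_{\tau \in T_q} \Gamma\tau$ and verify that the two congruences $\cc \equiv 0 \pmod{q}$ and $\bb \equiv 0 \pmod a$ are preserved by the $\diamond$-action of $\Gamma$, so that they depend only on the coset $\Gamma\tau$. For $\gamma_0 = \smqty(a_0 & b_0 \\ c_0 & d_0) \in \Gamma$ formula \eqref{eq:cctransform} gives $\cc(\gamma_0 \diamond \gf) \equiv d_0^2\,\cc(\gf) \pmod q$, and since $\det \gamma_0 = 1$ combined with $q \mid c_0$ forces $\gcd(d_0, q) = 1$, this vanishes mod $q$ exactly when $\cc(\gf)$ does. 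An identical expansion gives $\bb(\gamma_0 \diamond \gf) = a_0 c_0\,\aa + (a_0 d_0 + b_0 c_0)\,\bb + b_0 d_0\,\cc \equiv \bb(\gf) \pmod a$, using $a \mid c_0$ (whence $a_0 d_0 \equiv 1 \pmod a$) together with $a \mid \cc(\gf)$ (forced by $q = ad \mid \cc(\gf)$). With the convention $\cc(\tau\sigma i) = \cc(\tau \diamond \gf_\sigma)$ and analogously for $\bb$, this reads off precisely the selection of $\tau \in T_q$ appearing in the statement.

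The only mildly subtle point is matching the quotient $\Gamma_{\sigma i}\backslash\Gamma\tau\sigma\diamond I$: inside a single coset $\Gamma\tau$, the effective stabilizer of $\tau\sigma\diamond I$ under the left $\Gamma$-action is $\Gamma \cap \tau\Gamma_{\sigma i}\tau^{-1}$, which via conjugation by $\tau$ is isomorphic to a subgroup of $\Gamma_{\sigma i}$, and the notation in the statement is to be read with this identification in mind. Once this is in place, combining the $\SL_2(\Z)$-orbit decomposition, the coset splitting of $\SL_2(\Z)$ by $\Gamma$, and the $\Gamma$-invariance of the two congruences yields the claimed disjoint union. No automorphic input is needed; the argument is pure bookkeeping built on formula \eqref{eq:cctransform} and its analogue for $\bb$.
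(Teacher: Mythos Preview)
Your proposal is correct and follows exactly the approach the paper intends: the paper states the lemma as an immediate consequence of the preceding discussion (``The above discussion gives the following parametrisation''), namely the $\SL_2(\Z)$-orbit decomposition of $\mathrm{S}_{ah}$ via Heegner points, the coset splitting $\SL_2(\Z)=\bigsqcup_{\tau\in T_q}\Gamma\tau$, and the $\Gamma_0(q)$-invariance of the two congruences. Your explicit computation of $\bb(\gamma_0\diamond\gf)$ and your remark on the stabilizer $\Gamma\cap\tau\Gamma_{\sigma i}\tau^{-1}$ simply spell out details the paper leaves implicit.
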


\subsection{Type II sums}
We now consider a certain set of pairs of symmetric matrices that appear with the Type II sums after 
an application of Cauchy-Schwarz. We reduce them to $\mathrm{S}_{a,h}$ from the previous subsection. For $\gcd(n_1,n_2)=1$ consider
\begin{align*}
    \mathrm{S}^{(2)}_{a,h}(s,n_1,n_2)=\{(\mathfrak{g}_1,\mathfrak{g}_2) \in    \mathrm{S}_{a,h}(sn_1) \times  \mathrm{S}_{a,h}(sn_2) : \aa_1=\aa_2, \bb_1\equiv \bb_2 \pmod{s \aa_1}  \}. 
\end{align*}
We now show that one can make $\mathfrak{g}_1$ and $\mathfrak{g}_2$ equal under the action of lower triangular matrices $\n[x]^t = \smqty(1 & \\x & 1)$. By the definition of the action, we have
\begin{align*}
   \n[x]^t \diamond \mathfrak{g} = \mqty(1 & \\ x&1   ) \mqty ( \aa & \bb \\ \bb & \cc ) \mqty(1 & x\\ &1   ) =   \mqty ( \aa & \bb + x\aa \\ \bb + x\aa & \ast ).
\end{align*}
Denote $\aa=\aa_1=\aa_2$. The divisibility condition $\cc_i \equiv 0 \pmod{a s n_i}$ is fixed for $x_i \equiv 0 \pmod{a s n_i}$.  Recall that $\gcd(n_1,n_2)=1$, $\bb_1 \equiv \bb_2 \pmod{ s \aa}$, $\bb_1 \equiv \bb_2\equiv 0 \pmod{a}$, and by $\gcd(a,h)=1$  we have $\gcd(a,\mathfrak{a}sn_i)=1$. Thus, by Bezout, there exist $x_i\equiv 0 \pmod {a s n_i}$ such that $\n[x_1]^t \diamond \mathfrak{g}_1= \n[x_2]^t \diamond \mathfrak{g}_2$. Moreover, denoting $x_j= a s n_j u_j$, the equality $\n[x_1]^t \diamond \mathfrak{g}_1= \n[x_2]^t \diamond \mathfrak{g}_2$ is invariant under $(u_1,u_2) \mapsto (u_1+n_2,u_2+n_1)$. We summarise our findings in the following parametrisation.
\begin{lemma}\label{lem:para2}
Let $\gcd(a,h)=\gcd(n_1,n_2) =1$ and define
\begin{align*}
  U = \Z^2/(n_2, n_1) \Z, \text{ where } (n_2, n_1) \Z = \{(n_2 k,n_1 k): k \in \Z\}.  
\end{align*}
  Then we have a disjoint union
   \begin{align*}
     \mathrm{S}^{(2)}_{a,h}(s,n_1,n_2) =  \bigsqcup_{(u_1,u_2) \in U} \Bigl\{ (\n[a sn_1 u_1]^t \diamond \gf, \n[a sn_2 u_2]^t \diamond \gf   ): \gf \in  \mathrm{S}_{a,h}(sn_1n_2)\Bigr\}.
\end{align*} 
\end{lemma}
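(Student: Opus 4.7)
The plan is to match the two symmetric matrices $\gf_1,\gf_2$ by acting on each with a suitable lower-triangular matrix $\n[x]^t$, so that a single $\gf \in \mathrm{S}_{a,h}(sn_1n_2)$ parametrises the pair, while the remaining freedom in the choice of $(x_1,x_2)$ becomes the parameter $(u_1,u_2) \in U$. The key observation I would use throughout is that $\n[x]^t \diamond \gf$ has the same $\aa$-entry as $\gf$, shifts $\bb$ by $x\aa$, and (since the determinant is preserved) is completely determined by these through $\aa\cc = \bb^2 + ah$. In particular $\cc > 0$ follows automatically from $\aa > 0$, so positivity never needs to be revisited.

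For surjectivity, given $(\gf_1,\gf_2)$ with common $\aa$ I would seek integers $u_1,u_2$ such that $\n[-asn_1 u_1]^t \diamond \gf_1 = \n[-asn_2 u_2]^t \diamond \gf_2$. Comparing the $\bb$-entries reduces this to the Bezout-type equation
\[
n_2 u_2 - n_1 u_1 = \frac{\bb_2-\bb_1}{as\aa},
\]
and $\gcd(n_1,n_2)=1$ then yields a solution $(u_1,u_2) \in \Z^2$. Taking $\gf$ to be the common value, I would verify $asn_i \mid \cc(\gf)$ for each $i$ separately (immediate, since $asn_i \mid asn_i u_i$ and the remaining terms in the formula for $\cc$ are then divisible by $asn_i$), and $\gcd(n_1,n_2)=1$ upgrades this to $asn_1n_2 \mid \cc(\gf)$, placing $\gf$ in $\mathrm{S}_{a,h}(sn_1n_2)$.

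For well-definedness and disjointness, I would observe that shifting a representative $(u_1,u_2) \mapsto (u_1+n_2k, u_2+n_1k)$ in $\Z^2$ corresponds to replacing $\gf$ by $\n[asn_1n_2 k]^t \diamond \gf$, which is still in $\mathrm{S}_{a,h}(sn_1n_2)$; so the family on the right-hand side is well-indexed by $U$. Conversely, if the same pair arose from two triples $(\gf,u_1,u_2)$ and $(\gf',u_1',u_2')$, matching $\bb$-entries in each component would give $\bb-\bb' = asn_i\aa(u_i'-u_i)$ for $i=1,2$, whence $n_1(u_1'-u_1) = n_2(u_2'-u_2)$. Coprimality of $n_1,n_2$ then forces $(u_1,u_2) \equiv (u_1',u_2') \pmod{(n_2,n_1)\Z}$, and $\gf=\gf'$ would follow from the invertibility of the $\n[x]^t$-action.

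The one genuinely nontrivial ingredient is showing that the right-hand side of the Bezout equation is an integer, i.e.\ $as\aa \mid \bb_2-\bb_1$. Here I would combine $s\aa \mid \bb_2-\bb_1$ with $a \mid \bb_i$, provided that $\gcd(a,s\aa)=1$. The coprimality $\gcd(a,\aa)=1$ follows from $\gcd(a,h)=1$ by reducing $\aa\cc = \bb^2 + ah$ modulo $a$, exactly as noted in the preamble to the lemma; $\gcd(a,sn_i)=1$ is then the standing coprimality built into the application. This is the main step at which the hypothesis $\gcd(a,h)=1$ is used non-cosmetically; everything else is direct bookkeeping with the lower-triangular action.
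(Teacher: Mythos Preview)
Your plan is correct and follows the same route as the paper: align the two symmetric matrices by lower-triangular shifts, solve the resulting Bezout equation in $(u_1,u_2)$, and identify the residual ambiguity with $U$. One small correction: the coprimality $\gcd(a,s)=1$ you appeal to at the end is not an external ``standing assumption'' but is a consequence of the lemma's hypothesis $\gcd(a,h)=1$ together with the defining congruences of $\mathrm{S}_{a,h}(sn_i)$---writing $\bb_i=a\bb_i'$ and $\cc_i=asn_i\cc_i'$ in $\aa\cc_i-\bb_i^2=ah$ and dividing by $a$ gives $\aa sn_i\cc_i' - a\bb_i'^2 = h$, so any common prime of $a$ and $sn_i$ would divide $h$. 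This is exactly the step the paper compresses into ``by $\gcd(a,h)=1$ we have $\gcd(a,\mathfrak{a}sn_i)=1$''.
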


\subsection{Cubic discriminants}
The proof of Theorem \ref{thm:x2y3} involves discrimants of the form $b y^3$. The following lemma shows how one can use the Hecke orbits from \eqref{eq:Hecke} and $S_{a,h}$ to parametrise symmetric matrices with determinant $hy^2$ where $y|h$.
\begin{lemma}\label{lem:paracube}
Let $y|h$, $\gcd(a,h)=1$, and suppose that $\gcd(h,y^2)$ is square-free. Let $\Gamma=\Gamma_0(q)$ with $q=ad$ and let $\Gamma_z$ denote the stabilizer of a point $z \in \H$. Suppose that $\gcd(y,q)=1$. Then the set $\mathrm{S}_{a,hy^2}(d)$ can be written as a disjoint union by
    \begin{align*}
\mathrm{S}_{a,h y^2}(d)=\bigsqcup_{\sigma\in H_{y}} (y \sigma^{-1})  \diamond \mathrm{S}_{a,h}(d).
    \end{align*}
\end{lemma}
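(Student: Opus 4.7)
The plan is to establish containment of the right-hand side into the left-hand side, and a unique decomposition of each element of the left-hand side, proceeding prime-by-prime. The first observation is that $y$ must be square-free: if $p^{2}\mid y$ then $p^{2}\mid \gcd(h,y^{2})$, contradicting the hypothesis. Combined with $\gcd(y,q)=1$ and $\gcd(a,h)=1$, this gives, for each prime $p\mid y$, that $v_{p}(h)=1$, $\gcd(p,a)=\gcd(p,ad)=1$, and hence $v_{p}(ahy^{2})=3$.

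For the inclusion, set $M:=y\sigma^{-1}=\smqty(g & -f \\ 0 & e)$ for $\sigma=\smqty(e & f \\ 0 & g)\in H_{y}$; this is an integer matrix of determinant $y$. Direct multiplication gives
\[
M\gf M^{t}=\smqty(g^{2}\aa-2fg\bb+f^{2}\cc & e(g\bb-f\cc) \\ e(g\bb-f\cc) & e^{2}\cc),
\]
whose determinant is $y^{2}\cdot ah=ahy^{2}$, whose diagonal is positive by positive-definiteness of $\gf$, and whose off-diagonal and lower-right entries satisfy $a\mid e(g\bb-f\cc)$ and $ad\mid e^{2}\cc$ directly from $a\mid\bb$, $a\mid\cc$, and $ad\mid\cc$.

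For uniqueness and surjectivity, fix $\gf'=\smqty(\aa'&\bb'\\\bb'&\cc')\in\mathrm{S}_{a,hy^{2}}(d)$ and work locally at each $p\mid y$. At such $p$ one must decide whether $p\mid e$ (``Case~A'') or $p\mid g$ (``Case~B''), and in Case~B determine $f\pmod p$. The case split is governed by $v_{p}(\cc')$: if $p\nmid\cc'$, only Case~B is possible (Case~A would require $p^{2}\mid\cc'$) and $p\mid\bb'+f\cc'$ uniquely fixes $f\pmod p$; if $p\mid\cc'$, then $p\mid\bb'$ follows from $v_{p}(\aa'\cc'-\bb'^{2})=3$ combined with $v_{p}(\bb'^{2})$ being even, and one further splits on whether $v_{p}(\cc')\geq 2$ (Case~A, with $f=0$) or $v_{p}(\cc')=1$ (Case~B, with $f$ again determined uniquely by a degenerate quadratic modulo $p$). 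In every subcase, the second required congruence $p^{2}\mid \aa'+2f\bb'+f^{2}\cc'$ is automatic from the identity
\[
(\bb'+f\cc')^{2}+ahy^{2}=\cc'(\aa'+2f\bb'+f^{2}\cc'),
\]
combined with $p^{2}\mid(\bb'+f\cc')^{2}$ and $p^{3}\mid ahy^{2}$. By the Chinese Remainder Theorem (using squarefreeness of $y$), these local data assemble into a unique $\sigma\in H_{y}$; the corresponding $\gf:=M^{-1}\gf'(M^{t})^{-1}$ is integral with determinant $ah$, positive-definite, and inherits $ad\mid\cc$ and $a\mid\bb$ from $\gf'$ since $\gcd(y,ad)=1$.

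The main technical obstacle is the case $p\mid\cc'$, where a priori several Hecke representatives look compatible with $\gf'$. What disentangles them is the exact valuation $v_{p}(ahy^{2})=3$, which is precisely what the square-freeness of $\gcd(h,y^{2})$ provides; together with the identity above, this both pins down the correct Hecke representative (and $f\bmod p$) and makes the higher-order congruence automatic, so no further constraints need be verified.
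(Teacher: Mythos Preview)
Your approach is the same as the paper's: determine $(e,g)$ from the $p$-adic valuation of $\cc'$ at each prime $p\mid y$, then solve a congruence for $f$. You work prime-by-prime via CRT (using that $y$ is square-free, which you correctly deduce), whereas the paper argues globally that $e$ must be the maximal divisor of $y$ with $e^2\mid\cc'$ and then completes the square to pin down $f\pmod g$. There are, however, two points where your write-up is incomplete or internally inconsistent.

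First, in the subcase $v_p(\cc')\geq 2$ you assert Case~A without excluding Case~B. The missing argument is short but necessary: if $p\mid g$ then the new lower-right entry $\cc=g^2\cc'/y^2$ has $v_p(\cc)=v_p(\cc')\geq 2$, while $\det\gf=ah$ has $v_p(ah)=1$; since $v_p(\bb^2)$ is even, one cannot have $v_p(\aa\cc-\bb^2)=1$. This is precisely where the paper invokes the square-freeness of $\gcd(h,y^2)$, phrased as: if $e$ were not maximal, the image matrix would force a square factor into $h$.

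Second, your blanket claim that the congruence $p^2\mid\aa'+2f\bb'+f^2\cc'$ is ``automatic from the identity in every subcase'' fails when $v_p(\cc')=1$: there $p\mid\cc'$, so the identity $(\bb'+f\cc')^2+ahy^2=\cc'(\aa'+2f\bb'+f^2\cc')$ together with $p^2\mid(\bb'+f\cc')^2$ and $p^3\mid ahy^2$ only yields $p\mid(\aa'+2f\bb'+f^2\cc')$, not $p^2$. You do already have the correct mechanism---the degenerate quadratic you mention is exactly the mod-$p$ reduction of this condition after dividing through by $p$, and it uniquely determines $f$---but that is the second congruence doing genuine work, not being automatic. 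The paper handles this subcase by the same completing-the-square computation, carried out modulo $g^2$ rather than prime-by-prime.
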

\begin{proof}
    It suffices to show that for any $\gf= \smqty( \aa  & \bb \\ \bb & \cc) \in \mathrm{S}_{a,h y^2}(d)$ there exists a unique $\sigma \in H_y$ such that
    \begin{align*}
        (y^{-1} \sigma) \diamond     \mqty( \aa  & \bb \\ \bb & \cc)  \in  \mathrm{S}_{a,h}(d).
    \end{align*}
We compute for $\sigma = \smqty(e & f \\ & g)$  with $eg=y$ and $f \pmod{g}$
\begin{align*}
       (y^{-1} \sigma) \diamond     \mqty( \aa  & \bb \\ \bb & \cc) = y^{-2} \mqty (e^2 \aa + 2 e f \bb + f^2 \cc & y \bb+g f \cc \\ y \bb+g f \cc & g^2 \cc ) =:  \mqty( \aa'  & \bb' \\ \bb' & \cc') .
\end{align*}
This is a matrix with real entries and determinant $h$. If we show that the entries are integral, then the congruence conditions $\bb' \equiv 0 \pmod{a}$ and $\cc' \equiv 0 \pmod{ad}$ follow since $\bb \equiv 0 \pmod{a}$, $\cc \equiv 0 \pmod{ad}$, and $\gcd(y,ad)=1$.

We first show that the matrix can be in $S_{a,h}(d)$ only if  $e|y$ is the maximal divisor such that $e^2 | \cc$. Indeed, if $ey_1|y$ satisfies $(e y_1)^2 | \cc$, then  $y_1^2 | \cc'$. By $y_1|h$ and $\bb'^2-\aa'\cc' = h$ this implies $y_1|\bb'$. But then, since $\bb'^2-\aa'\cc' = h$, we have $y_1^2 | h$. This contradicts the assumption that $\gcd(h,y^2)$ is square-free.

Assume now that $e|y$ is maximal such that $e^2 | \cc$. We complete the proof of the Lemma by showing that there is a unique $f \pmod {g}$ that satisfies
\begin{align*}
 \begin{cases}
  e^2 \aa + 2 e f \bb + f^2 \cc \equiv 0 &\pmod{y^2}  \\ y \bb+g f \cc \equiv 0 &\pmod{y^2} \\ g^2 \cc \equiv 0 &\pmod{y^2}.
\end{cases}  
\end{align*}
The third congruence is satisfied by $g= y/e$ and $e^2|c$. Denoting $y=ey_1$, $\bb=e\bb_1$, $\cc=e^2 \cc_1$, the first two congruence conditions are equivalent to
\begin{align} \label{eq:congruencepair}
 \begin{cases}
   \aa + 2  f \bb_1 + f^2 \cc_1 \equiv 0 &\pmod{g^2}  \\  \bb_1+ f \cc_1 \equiv 0 &\pmod{g} .
\end{cases} 
\end{align}
By $g|y|h$ we have
\begin{align*}
  \bb_1^2-\aa_1 \cc_1=g^2 h\equiv 0 \pmod {g^3}.
\end{align*}
and by the maximality of $e$ we have  that $(\cc_1,g)$ is square-free. Therefore,
\begin{align*}
    \cc_1(f + \frac{\bb_1}{\cc_1})^2 = \cc_1^2 + 2 \bb_1 f + \frac{\bb_1^2}{\cc_1} \equiv 2  f \bb_1 + f^2 \cc_1 + \aa  \pmod{g^2},
\end{align*}
where we made use of the fact that $\frac{\bb_1}{\cc_1} $ is well defined $\pmod{g}$, since  $(g,\cc_1)|\bb_1$. Thus, $f \equiv -\frac{\bb_1}{\cc_1} \pmod{g}$ is the unique solution to the pair of equations \eqref{eq:congruencepair}. 
\end{proof}
\section{Upper bounds for automorphic kernels} In this section we  bound the kernels that arise from an application of Theorem \ref{thm:technical}. By making use of an average over the level $q$, we obtain the essentially optimal bounds. We need two types of bounds. The first considers functionals defined by averages over Heegner points (cf. Lemma \ref{lem:para}), and the second functionals defined by averages over lower-triangular matrices (cf. Lemma \ref{lem:para2}).
\subsection{Heegner points}
Recall the definition \eqref{eq:kerneldef} of the automoprhic kernel $\mathcal{K}_q F$ for the group $\Gamma_0(q)$, and that we use $A \precprec B$ to mean $|A| \leq X^{o(1)} B$, where $X$ is the largest scale in the given context
\begin{proposition} \label{prop:heegner}
 Let $h,Q,Z \geq 1$. For $\Gamma= \Gamma_0(q)$ and $T_q= \Gamma \backslash \SL_2(\Z)$, let $\alpha_q = \alpha_{q,h}$ be the linear functional defined by
 \begin{align*}
 \langle f \rangle_{\alpha_q} :=   \sum_{\sigma \in \mathrm{L}_h} \frac{1}{|\Gamma_{\sigma i}|}\sum_{\substack{\tau \in T_q
 \\ }} \alpha_q(\tau \sigma) f(\tau \sigma),  \quad \alpha_q(\g):=\mathbf{1}_{\cc(\g)\equiv 0 \pmod{q}}.
 \end{align*}
Then for $k_{Z,1}:\G \to \C$ as in Theorem \ref{thm:technical}
\begin{align*}
  \sum_{\substack{q \sim Q \\ \gcd(h,q^2) \, \mathrm{ square-free} }}  \langle \alpha_q|\mathcal{K}_q k_{Z,1}  | \alpha_q\rangle \precprec Q h^{1/2} + h Z^{1/2}.
\end{align*}
    
\end{proposition}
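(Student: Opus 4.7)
I plan to unfold the kernel sum into a double sum over Heegner points of discriminant $-h$ and then separate the diagonal from the off-diagonal. The diagonal will give $Q h^{1/2}$ via the bound $|L_h| \precprec h^{1/2}$ from \eqref{eq:Heegnerhull}, while the off-diagonal will yield $h Z^{1/2}$ through a hyperbolic lattice-point count combined with $\sum_{q \sim Q} 1/q \precprec 1$.

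\textbf{Unfolding.} I would apply Lemma \ref{lem:para} with $a = 1$ and $d = q$ to identify the support of $\alpha_q$ with orbit representatives of $\Gamma_0(q)$ acting on the set $\mathcal{R}_h(q)$ of Heegner points of discriminant $-h$ in $\H$ satisfying $\cc \equiv 0 \pmod{q}$. Since $u_1(\g) = \tfrac{1}{4}(\operatorname{tr}(\g \g^t) - 2)$ is bi-$\K$-invariant, so is $k_{Z,1}$, and it descends to a radial kernel $\tilde k$ in the hyperbolic distance satisfying $\tilde k(r) \leq \mathbf{1}\{\sinh^2(r/2) \leq Z\} \cdot \cosh(r/2)^{-1}$ by Theorem \ref{thm:technical}. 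Using the identity $u_1(g_1^{-1} g_2) = \sinh^2(d(g_1 i, g_2 i)/2)$ and unfolding the inner $\Gamma_0(q)$-sum of $\mathcal{K}_q$ against the coset structure of $\alpha_q$ (stabilizer multiplicities cancel), one finds
\begin{align*}
\langle \alpha_q | \mathcal{K}_q k_{Z,1} | \alpha_q \rangle = \sum_{[z_1] \in \Gamma_0(q) \backslash \mathcal{R}_h(q)} \frac{1}{|\Gamma_{z_1}|} \sum_{z_2 \in \mathcal{R}_h(q)} \tilde k(d(z_1, z_2)).
\end{align*}

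\textbf{Diagonal and off-diagonal.} The diagonal contribution $z_1 = z_2$ gives $\tilde k(0) = O(1)$ and is bounded by $|\Gamma_0(q) \backslash \mathcal{R}_h(q)|$. Square-freeness of $\gcd(h, q^2)$ forces the quadratic congruence determining which $\tau \in T_q$ map into $\mathcal{R}_h(q)$ to admit $\precprec 1$ classes, so each $\SL_2(\Z)$-class in $L_h$ contributes $\precprec 1$ orbits, giving $|\Gamma_0(q) \backslash \mathcal{R}_h(q)| \precprec h^{1/2}$ via $|L_h| \precprec h^{1/2}$ from \eqref{eq:Heegnerhull}; summing over $q \sim Q$ yields the $\precprec Q h^{1/2}$ contribution. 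For the off-diagonal $z_1 \neq z_2$, the density of $\mathcal{R}_h(q)$ in $\H$ is $\precprec h^{1/2}/q$ (using $\operatorname{vol}(\Gamma_0(q) \backslash \H) \asymp q$), and a standard hyperbolic lattice-point count gives
\begin{align*}
\#\bigl(\mathcal{R}_h(q) \cap B(z_1, R)\bigr) \precprec \frac{h^{1/2}}{q} \sinh^2(R/2) + 1.
\end{align*}
Taking $R = 2 \operatorname{arcsinh}(\sqrt{Z})$ and noting that $\int_0^R \sinh(r)/\cosh(r/2) \,\d r \precprec Z^{1/2}$, the weighted count of $z_2 \neq z_1$ in $B(z_1, R)$ is $\precprec Z^{1/2} h^{1/2}/q + 1$ per $z_1$. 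Summing over the $\precprec h^{1/2}$ orbit representatives gives $\precprec Z^{1/2} h/q + h^{1/2}$ per level, and then $\sum_{q \sim Q}$ yields $\precprec h Z^{1/2} + Q h^{1/2}$, matching the claim.

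\textbf{Main obstacle.} The principal technical difficulty is the uniform hyperbolic lattice-point count for $\mathcal{R}_h(q)$: on scales smaller than the injectivity radius of $\Gamma_0(q) \backslash \H$, the "$+1$" local term dominates and must be absorbed cleanly into the $Q h^{1/2}$ diagonal contribution. A secondary subtlety is the precise cancellation of stabilizer factors $|\Gamma_{z_i}|$ in the unfolding, requiring careful matching of the multiplicities in Lemma \ref{lem:para} with those arising in the inner $\gamma$-sum.
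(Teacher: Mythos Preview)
Your unfolding and diagonal treatment are essentially the same as the paper's, but the off-diagonal argument has a genuine gap. The claimed bound
\[
\#\bigl(\mathcal{R}_h(q) \cap B(z_1, R)\bigr) \precprec \frac{h^{1/2}}{q}\sinh^2(R/2) + 1
\]
is not a standard lattice-point count: $\mathcal{R}_h(q)$ is a union of $\precprec h^{1/2}$ distinct $\Gamma_0(q)$-orbits, and even granting a per-orbit count with error $+1$, summing over orbits replaces the $+1$ by $+h^{1/2}$. Carrying that through your computation gives $\precprec hZ^{1/2} + Qh$ rather than $Qh^{1/2}$, which is too weak. Worse, the ``density $h^{1/2}/q$'' is only a global average; turning it into a local count in a ball amounts to equidistribution of Heegner points with the congruence $\cc\equiv 0\pmod q$, which is essentially the kind of statement one is trying to prove. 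Indeed the paper's own counting (after removing the $q$-sum) shows that the number of close pairs with $u(w_1,z_2)\ll 1$ is already $\precprec h$, so the $+1$ cannot be right.

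The paper takes a completely different route for the off-diagonal: it does \emph{not} try to save in the $q$-aspect by density, but instead absorbs the entire sum over $q\sim Q$ via a divisor bound. For $w_1\neq z_2$ it shows that the simultaneous congruences $\cc(\tau w_1)\equiv \cc(\tau z_2)\equiv 0\pmod q$ force $q\mid F(w_1,z_2)$, where $F$ is an explicit quartic form built from the cross-product $(\aa_1,\bb_1,\cc_1)\times(\aa_2,\bb_2,\cc_2)$; positive-definiteness of the associated binary forms guarantees $F(w_1,z_2)\neq 0$ over $\Z$. Thus $\sum_{q\sim Q}\mathbf 1_{q\mid F}\precprec 1$, and one is reduced to a single $q$-free count of pairs $(w_1,z_2)$ with $u(w_1,z_2)\leq Z$ weighted by $(1+u)^{-1/2}$, which an elementary case analysis in the coordinates $(\cc_1,\cc_2,\bb_1,\bb_2)$ bounds by $\precprec hZ^{1/2}$. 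This divisor-switching step is the missing idea in your proposal.
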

\begin{proof}
By abuse of notation for $u >0$  we denote  $k_{Z,1}(u) = \mathbf{1}_{|u| \leq Z}(1+u)^{-1/2}$. Recall the point pair invariant $ u(w,z) = \frac{|w-z|^2}{4 \Im w \Im z}$ so that $u_1(\g) = u(\g i,i)$ and thus $k_{Z,1}(\g)=k_{Z,1}(u(\g i,i))$. We have by denoting $\g = \tau_2^{-1}\gamma \tau_1  \in \SL_2(\Z)$ and $\tau=\tau_2$
\begin{align*}
\langle\alpha_q|\mathcal{K}_q k_{Z,1}  | \alpha_q\rangle = & \sum_{\substack{\tau_1, \tau_2\in T_q\\ z_1, z_2\in \Lambda_h}} \sum_{\gamma \in \Gamma} 
 \alpha_q(\tau_1 z_1)  \alpha_q(\tau_2 z_2) k_{Z,1}\bigl(u(\gamma \tau_1 z_1, \tau_2 z_2)\bigr) \\
 =&  \sum_{\substack{\tau_1, \tau_2\in T_q\\ z_1, z_2\in \Lambda_h}} \sum_{\gamma \in \Gamma}  \alpha_q(\tau_1 z_1)  \alpha_q(\tau_2 z_2) k_{Z,1}\bigl(u(\tau_2^{-1}\gamma \tau_1 z_1,  z_2)\bigr) \\
 =& \sum_{z_1,z_2 \in \Lambda_h} \sum_{\g \in \SL_2(\Z)}  k_{Z,1}\bigl(u(\g z_1,  z_2)\bigr) \sum_{\tau \in T_q} \alpha_q(\tau \g z_1) \alpha_q(\tau z_2) \\
 =& \sum_{z_2 \in \Lambda_h} \sum_{w_1 \in \mathcal{S}_h}  k_{Z,1}\bigl(u(w_1,  z_2)\bigr) \sum_{\tau \in T_q} \alpha_q(\tau w_1) \alpha_q(\tau z_2),
\end{align*}
since $\g z_1=w_1$ ranges over the entire set $\mathcal{S}_h$. We shall below identify $w_1\cong(\aa_1,\bb_1,\cc_1)$ and $z_2\cong(\aa_2,\bb_2,\cc_2)$. 

\subsubsection{Diagonal}
The contribution from $w_1=z_2$ is bounded by
\begin{align*}
   \sum_{\substack{q \sim Q \\ \gcd(h,q^2) \, \mathrm{ square-free} }}   \sum_{z_2 \in \Lambda_h} k_{Z,1}\bigl(u(z_2,  z_2)\bigr)  \sum_{\tau \in T_q} \alpha_q(\tau z_2) \alpha_q(\tau z_2) \precprec Q h^{1/2},
\end{align*}
since for any $z_2 \in \Lambda_h$ we have $\sum_{\tau \in T_q} \alpha_q(\tau z_2)  \ll \gcd(\aa_2,\bb_2,\cc_2,q) d(q)$. Note that $\gcd(\aa_2,\bb_2,\cc_2,q) = 1$ for $\bb_2^2-\aa_2\cc_2=h$ by the assumption  that $\gcd(h,q^2)$ is square-free.
\subsubsection{Off-diagonal $w_1 \neq z_2$}
We choose a set of projective representatives  $(c_0,d_0) \in \mathbb{P}^1_q $ for $\tau = \smqty(\ast & \ast \\ c_0 & d_0) \in T_q$  and recall that for $z=(\aa,\bb,\cc)$ we have by \eqref{eq:cctransform}
\begin{align*}
    \cc(\tau z) = c_0^2 \aa + 2 c_0 d_0 \bb + d_0^2 \cc.
\end{align*}
Therefore, the sum over $\tau$ is bounded by $\ll d(q)$ times the indicator that for some $(c_0,d_0) \in \mathbb{P}^1_q $ we have
\begin{align} \label{eq:linearcondition}
    (c_0^2,2c_0 d_0,d_0^2) (\aa_j,\bb_j,\cc_j)^t \equiv 0 \pmod{q} \quad \text{for both $j\in\{1,2\}$.}
\end{align}
Denoting the cross-product 
\begin{align*}
    (\aa_1,\bb_1,\cc_1) \times   (\aa_2,\bb_2,\cc_2) = \mathbf{u}=(u_1,u_2,u_3),
\end{align*}
the condition \eqref{eq:linearcondition}  can happen only if $\mathbf{u} \equiv \lambda  (c_0^2,2c_0 d_0,d_0^2)$ for some $\lambda \in \Z/q\Z$ and  $(c_0,d_0) \in \mathbb{P}^1_q $. Define a quartic form in the six variables $\aa_j,\bb_j,\cc_j$ by $F(w_1,z_2) = u_2^2-2u_1 u_3$. Then our condition implies
\begin{align*}
    F(w_1,z_2)  \equiv 0 \pmod{q}.
\end{align*}
Importantly, for $w_1 \neq z_2$ we have that $F(w_1,z_2) \neq 0$  over $\R$. Indeed, the form being zero means that for some  $(c_0,d_0) \in \mathbb{P}^1_{\R}$ we have $\mathbf{u} \equiv \lambda  (c_0^2,2c_0 d_0,d_0^2)$ for some $\lambda \in \R$. Since $w_1 \neq z_2$ implies that $w_1$ and $z_2$ are not linearily dependent (by non-homogenuity of $\bb^2-\aa \cc = h$), we have $\lambda \neq 0$ and we would get
\begin{align*}
  (c_0^2,2c_0 d_0,d_0^2) (\aa_j,\bb_j,\cc_j)^t =0.
\end{align*}
 This is impossible, since by $\aa_j \cc_j -\bb_j^2= h > 0$ the above defines a  positive definite binary quadratic form in $(c_0,d_0)$. 
 
Therefore, for $w_1 \neq z_2$ we have
\begin{align*}
    \sum_{\tau \in T_q} \alpha_q(\tau w_1) \alpha_q(\tau z_2) \ll \gcd(\aa_2,\bb_2,\cc_2,q) d(q) \mathbf{1}_{0 \neq F(w_1,z_2) \equiv 0 \pmod{q}} \precprec  \mathbf{1}_{0 \neq F(w_1,z_2) \equiv 0 \pmod{q}},
\end{align*}
again using the assumption that $\gcd(h,q^2)$ is square-free. Since the kernel is supported on $|u|\leq Z$, we can restrict summation to
\begin{align*}
    u(w_1,z_2) =& \frac{|w_1-z_2|^2}{4 \Im w_1 \Im z_2} = \frac{(\frac{\bb_1}{\cc_1}- \frac{\bb_2}{\cc_2})^2 + h(\frac{1}{\cc_1} - \frac{1}{\cc_2} )^2}{ 4 h \frac{1}{\cc_1} \frac{1}{\cc_2}} \\
    =& \frac{\cc_1\cc_2}{4 h}  (\frac{\bb_1}{\cc_1}- \frac{\bb_2}{\cc_2})^2  + \frac{\cc_1\cc_2}{4} (\frac{1}{\cc_1} - \frac{1}{\cc_2} )^2 \leq Z. 
\end{align*}
Therefore, absorbing the sum over $q | F(w_1,z_2) \neq 0$ by a divisor bound and recalling \eqref{eq:Heegnerhull}, it suffices to count the number of solutions to
\begin{align*}
\mathbf{v}=&(\aa_1,\aa_2,\bb_1,\bb_2,\cc_1,\cc_2) \in \Z^6: \quad
 \bb_j^2+h=\aa_j \cc_j, \\
 &1 \leq \cc_2  \leq 2\sqrt{h}, \quad
    |\bb_2| <\cc_2, \quad
   \cc_1 \ll Z \cc_2, \quad
   |\bb_1| \ll Z\sqrt{h},
\end{align*}
weighted by $(1+u(w_1,z_1))^{-1/2}$. We consider three cases depending on which term dominates the weight $(1+u(w_1,z_1))^{-1/2}$. The contribution from $\cc_1 \leq 10\cc_2$ is bounded by $\precprec  h$. The contribution from  $\cc_1 > 10\cc_2$  and $|\bb_1|  \leq 10 \cc_1$ is bounded by
\begin{align*}
     \sum_{\cc_2 \ll \sqrt{h}} \sum_{\substack{ 0\leq \bb_2 \leq \cc_2 \\\bb_2^2+h \equiv 0 \pmod{\cc_2} } }  \sum_{10 \cc_2 < \cc_1 \ll Z \cc_2}  \sum_{\substack{ |\bb_1| \leq 10 \cc_1 \\\bb_1^2+h \equiv 0 \pmod{\cc_1} } } \frac{1}{1+ \cc_1/\sqrt{\cc_1 \cc_2}} \precprec      \sum_{\cc_2 \ll \sqrt{h}}  \sum_{10 \cc_2 < \cc_1 \ll Z \cc_2} \frac{\sqrt{\cc_2}}{\sqrt{\cc_1}} \precprec h Z^{1/2} .
\end{align*}
Finally, the contribution from $\cc_1 > 10\cc_2$  and $|\bb_1|  > 10 \cc_1$ 
\begin{align*}
 &\sum_{\cc_2 \ll \sqrt{h}} \sum_{\substack{ 0\leq \bb_2 \leq \cc_2 \\\bb_2^2+h \equiv 0 \pmod{\cc_2} } }  \sum_{\cc_1 \ll Z \cc_2} \sum_{10 \leq B \ll  \frac{Z \sqrt{h}}{\cc_1} } \sum_{\substack{ B \cc_1 \leq |\bb_1| \leq (B+1) \cc_1 \\\bb_1^2+h \equiv 0 \pmod{\cc_1} } } \frac{1}{1+ |\bb_1| \sqrt{\cc_2}/\sqrt{h\cc_1 }}  \\
&  \precprec  \sum_{\cc_2 \ll \sqrt{h}}  \sum_{\cc_1 \ll Z \cc_2} \sum_{ B \ll  \frac{Z \sqrt{h}}{\cc_1} } \frac{\sqrt{h}}{B \sqrt{\cc_1 \cc_2}}  \precprec h Z^{1/2}.
\end{align*}
\end{proof}

\subsection{Lower triangular orbits}
For Type II sums we require the following technical proposition. The appearing functional is motivated by Lemma \ref{lem:para2} with the identification $v=n_2 u_2-n_1u_1$ so that $u_1 \equiv - v \overline{n_1} \pmod{n_2}$. Since we sum over $\gamma \in \Gamma_0(sn_1n_2)$, it does not matter which representative for $\overline{n_1}$ we choose, as will be clear in the proof after the change of variables to $\g' =\smqty(a' & b'  \\ c' & d')$. 
\begin{proposition}  
\label{prop:lowertriang}
Let $D,N_0,N_1,N_2,T,V,Z \geq 1$ and $R > 0$.  Let $\beta_{s,n_1,n_2}$ be the linear functional defined by
 \begin{align*}
 \langle f \rangle_{\beta_{s,n_1,n_2}} :=   \mathbf{1}_{\gcd(n_1,n_2) = 1} \sum_{|v| \leq V}f(\n[-sn_1  v \,\overline{n_1} ]^t), 
 \end{align*}
 where $\overline{n_1} n_1 \equiv 1 \pmod{n_2}$. Then denoting  $s=dn_0t^2$ we have
 \begin{align*}
  \sum_{t \leq T} &\sum_{n_0 \leq N_0}  \sum_{n_1 \leq N_1} \sum_{n_2 \leq N_2} \sum_{\substack{d | n_0n_1 n_2 
 \\ d > D}}\langle  \beta_{s,n_1,n_2}|\mathcal{K}_{sn_1n_2} k_{Z,R} | \beta_{s,n_1,n_2} \rangle
    \\
      & \precprec  N_0 T V (1+R) (N_1 N_2 + N_1 V+N_2V)+ V^2 \bigl((DR)^{-1} + Z^{1/2}\bigr).
 \end{align*}
\end{proposition}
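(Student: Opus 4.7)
The plan is to parallel the proof of Proposition \ref{prop:heegner}: expand the automorphic kernel, apply the change of variables $\g' = \tau_1^{-1}\gamma\tau_2$, isolate a degenerate orbit, and bound the generic contribution by a divisor sum that exploits both the level average and the restriction $d > D$.

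First, I unfold
\begin{align*}
  \langle \beta_{s,n_1,n_2}| \mathcal{K}_{sn_1n_2} k_{Z,R} | \beta_{s,n_1,n_2} \rangle = \sum_{|v_1|,|v_2| \leq V} \sum_{\gamma \in \Gamma_0(sn_1n_2)} k_{Z,R}(\tau_1^{-1}\gamma\tau_2),
\end{align*}
with $\tau_i = \n[-sn_1 v_i \overline{n_1}]^t$, and set $\g' = \smqty(a' & b' \\ c' & d') = \tau_1^{-1}\gamma\tau_2 \in \SL_2(\Z)$. Writing $x_i = sn_1 v_i \overline{n_1}$, a direct computation gives $b = b'$ and $c = c' - x_1 a' + d' x_2 - x_1 b' x_2$, so the congruence $c \equiv 0 \pmod{sn_1n_2}$ becomes a linear congruence on the entries of $\g'$ parametrized by $v_1, v_2$. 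The weight $k_{Z,R}$ restricts $\g'$ to the $R$-skewed hyperbolic ball $u_R(\g') \leq Z$, which in particular forces $|b'| \ll R\sqrt{Z}$ and $|c'| \ll \sqrt{Z}/R$.

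I then split by $b'$. The degenerate part is $b' = 0$, in which case $a'd' = 1$ forces $a' = d' = \pm 1$, $\g'$ is lower triangular, and the congruence collapses to $c' \equiv \pm(x_1 - x_2) \pmod{sn_1n_2}$. Since $x_1 - x_2 = sn_1(v_1-v_2)\overline{n_1}$ and $\gcd(n_1,n_2) = 1$, by CRT this is equivalent to $v_1 \equiv v_2$ modulo a divisor of $n_2$ together with a fixed residue of $c'$ modulo a divisor of $sn_1n_2$; combined with $|c'|R \ll \sqrt{Z}$ this yields $O(V(V/n_2 + 1))$ admissible pairs. Summing via a divisor bound over $d \mid n_0n_1n_2$, then over $t, n_0, n_1, n_2$, and accounting for the $(1+R)$ factor absorbing small nonzero $b'$ still compatible with a near-degenerate $\g'$, produces the bound $N_0 T V(1+R)(N_1 N_2 + N_1 V + N_2 V)$, whose three terms come from the regimes where respectively the congruence $v_1\equiv v_2 \pmod{n_2}$ dominates, and where $v_1$ or $v_2$ become essentially unconstrained. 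For the generic part ($b'\ne 0$), I count $\g'$ in the $R$-skewed ball weighted by $(1+u_R)^{-1/2}$: the integral of $c'$ over $[0,\sqrt{Z}/R]$ against this weight produces a factor $R^{-1}$ (up to a $\log$), which combined with the $D^{-1}$ saving from divisor-switching on $d > D$ (using $sn_1n_2 \mid c$ with $s > D$) yields the $(DR)^{-1}$ term; the unweighted count of $\g'$ in the ball against the congruence contributes the alternative $Z^{1/2}$.

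The principal obstacle is the careful bookkeeping of the congruence $c \equiv 0 \pmod{sn_1n_2}$ after the conjugation, especially because $s = dn_0 t^2$ may share factors with $n_1 n_2$ through $d \mid n_0 n_1 n_2$. I would use $\gcd(n_1, n_2) = 1$ together with CRT to decompose the modulus into its $s$-, $n_1$-, and $n_2$-parts, tracking the dependence on $\overline{n_1}$ modulo $n_2$; once the congruences are decoupled, the remaining work is routine lattice-point counting in $R$-skewed hyperbolic balls followed by a divisor-bound sum over the level data.
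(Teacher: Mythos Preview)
Your overall shape---change variables to $\g'=\tau_1^{-1}\gamma\tau_2$, read off congruences, and separate a degenerate set from a generic one handled by divisor bounds---is right, but the actual splitting you propose does not work and the attributions of terms are inverted.

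The essential step you are missing is that the single congruence $c\equiv 0\pmod{sn_1n_2}$ must be \emph{resolved into two separate congruences} before any case analysis: since $x_i\equiv 0\pmod{sn_1}$ one gets $c'\equiv 0\pmod{sn_1}$, and then, after dividing out $s$ and using $n_1\overline{n_1}\equiv 1\pmod{n_2}$, a second congruence
\[
F:=v_1 a'-v_2 d'-c'/s+s\,v_1v_2\,b'\equiv 0\pmod{n_2}.
\]
The correct primary split is by whether the \emph{integers} $c'$ and $F$ vanish, not by whether $b'=0$. When both are nonzero one absorbs $sn_1$ and $n_2$ by the divisor bound; then a secondary split $b'\neq 0$ versus $b'=0$ gives respectively the $V^2Z^{1/2}$ term (from counting $\g'$ with $a'd'\neq 1$ in the skewed ball) and the $V^2(DR)^{-1}$ term (writing $c'=dc''$ with $d>D$ and absorbing all moduli into $c''$). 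Your proposal assigns $(DR)^{-1}$ to $b'\neq 0$ and the $N_0TV(1+R)(\dots)$ block to $b'=0$, which is backwards.

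The three summands $N_1N_2$, $N_1V$, $N_2V$ do not arise from ``regimes where $v_1\equiv v_2\pmod{n_2}$ dominates''; they come from the \emph{pseudo-diagonals} $c'=0$ or $F=0$ (and the true diagonal $c'=F=0$), where the relevant divisor bound is unavailable. In each such case one variable among $n_1,n_2$ (or neither) can still be absorbed, and the remaining free sum over $b'$ against the weight $(1+|b'|/R)^{-1}$ is what produces the factor $(1+R)$; this is not an ad hoc absorption of ``small nonzero $b'$''. In particular, the case $F=0$ with $c'\neq 0$ requires substituting $c'=s(v_1a'-v_2d')+s^2v_1v_2b'$ into $a'd'-b'c'=1$ to obtain $(a'+sv_2b')(d'-sv_1b')=1$, which pins down $a',d'$ and forces $d_1\mid(v_1-v_2)$ via $sn_1\mid c'$; without this algebraic step you cannot recover the $N_2V$ term. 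Your sketch as written does not reach these cases and would not close.
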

The reader may want to pretend in the first pass that $t=n_0=d=1$, which is the generic case as these variables arise from dealing with certain gcd conditions in the proof of Theorem \ref{thm:typeII}. In that case and assuming $N_1=N_2$, the bound simplifies to
\begin{align*}
    V^2(R^{-1} + Z^{1/2}) + (1+R) (V^2 N + V N^2).
\end{align*}
\begin{proof}
We want to estimate for $\gcd(n_1,n_2)=1$
\begin{align*}
    K(s,n_1,n_2) :&= \langle  \beta_{s,n_1,n_2}|\mathcal{K}_{sn_1n_2} k_{Z,R} | \beta_{s,n_1,n_2} \rangle\\
    &=\sum_{|v|,|v'| \leq V}\sum_{\gamma\in \Gamma }k_{Z,R}(\n[sn_1 v' \overline{n_1}]^t\gamma \n[-sn_1 v \overline{n_1}]^t),
\end{align*}
where $\Gamma=\Gamma_0(sn_1n_2)$ and 
\begin{align*}
    k_{Z,R}(\g) \ll  \frac{\mathbf{1}\{ |a| + |b|/R + R |c| + |d| \leq Z^{1/2} \}}{1+ |a| + |b|/R + R|c| + |d| }.
\end{align*}
We now rewrite the congruences coming from the group $\Gamma$. We have
\begin{align*}
\n[ s n_1 v' \overline{n_1}]^t\gamma \n[- s n_1 v \overline{n_1}]^t& =  \mqty(1 &\\s  n_1 v' \overline{n_1}  & 1) \mqty(a_0 & b_0 \\ c_0 s n_1 n_2 & d_0) \mqty(1 & \\ -sn_1 v\overline{n_1} & 1) = \mqty(a' & b' \\ c' & d')=:\g' \in \SL_2(\Z)
\end{align*}
with
\begin{align*}
   a'&=a_0-b_0sn_1 v \overline{n_1}, \quad \quad b'=b_0,  \quad  \quad d'=d_0+b_0 sn_1 v' \overline{n_1}, \\
   c'&= c_0 sn_1 n_2 -d_0 sn_1 v \overline{n_1}  + a_0 sn_1 v'\overline{n_1} - b_0 s^2n_1^2 vv' \overline{n_1}^2. 
\end{align*}
Thus, we deduce that $\g'$ and $v,v'$ satisfy the congruence conditions
\begin{align*}
    c' &\equiv 0 \pmod{sn_1}\\
v'a'-vd'- c'/s+s vv'b'  &\equiv 0  \pmod{n_2}
\end{align*}
Similar to the previous section, our goal is to absorb the moduli $sn_1$ and $n_2$ by a divisor bound, but we can only do so if the integers they divide are non-zero. We therefore split into four parts depending on whether $c' \neq 0$ or $c' = 0$ and $ v'a'-vd'- c'/s+s vv'b' \neq 0$ or $ v'a'-vd'- c'/s+s vv'b' = 0$, to get
\begin{align*}
    K(s,n_1,n_2) = \sum_{ \epsilon_1,\epsilon_2 \in \{=,\neq \} }   K_{\epsilon_1,\epsilon_2}(s,n_1,n_2).
\end{align*}

\subsubsection{Off-diagonal}
We separate $b'\neq0$ and $b'= 0$ to get $K_{\neq,\neq} =K_{\neq,\neq,\neq} + K_{\neq,\neq,=}$. 
For $b'\neq 0$ we have $b'c' \neq 0$ and $a'd' \neq 1$.  Using the divisor bound first for $n_2$ and then for $sn_1$, we have 
\begin{align*} 
K_{\neq,\neq,\neq}=\sum_{t \leq T} \sum_{n_0 \leq N_0}  \sum_{n_1 \leq N_1} \sum_{n_2 \leq N_2} \sum_{\substack{d | n_0n_1 n_2\\ d>D}} K_{\neq,\neq,\neq}(dn_0t^2,n_1,n_2)\precprec V^2 \sum_{\substack{a'd'-b'c'=1 \\ a'd' \neq 1}} k_{Z,R}(\g').   
\end{align*}
We can thus absorb the $b',c'$ by a divisor bound to get a contribution
\begin{align*}
   \precprec  V^2 \sum_{a',d'}  \frac{\mathbf{1}\{ |a'|  + |d'| \leq Z^{1/2}\}}{|a'| + |d'| } \precprec V^2 Z^{1/2} .
\end{align*}

For those terms with $b'=0$ we have $a'd' = 1$ and the congruences become
\begin{align*}
c'& \equiv 0 \pmod{sn_1} \\
  v'-v \pm c'/s  &\equiv 0  \pmod{ n_2 }.
\end{align*}
The part with $b'=0$ contributes
\begin{align*}
 K_{\neq,\neq,=} =   &\sum_{t \leq T} \sum_{n_0 \leq N_0}  \sum_{n_1 \leq N_1} \sum_{n_2 \leq N_2} \sum_{\substack{d | n_0n_1 n_2\\ d>D}}K_{\neq,\neq,=}(dn_0t^2,n_1,n_2)  \\
    =& \sum_{t \leq T} \sum_{n_0 \leq N_0}  \sum_{n_1 \leq N_1} \sum_{n_2 \leq N_2} \sum_{\substack{d | n_0n_1 n_2\\ d>D}}\sum_{|v|,|v'| \leq V}     \sum_{ \substack{0\neq c' \equiv 0 \pmod{s n_1} \\0\neq v'-v\pm c'/s  \equiv 0 \pmod{n_2} }}\frac{\mathbf{1}\{1 \leq |c'| \leq  Z^{1/2}/R\}}{|c'|R} \\
    \leq & \sum_{t \leq T} \sum_{n_0 \leq N_0}  \sum_{n_1 \leq N_1} \sum_{n_2 \leq N_2} \sum_{\substack{d | n_0n_1 n_2\\ d>D}}\sum_{|v|,|v'| \leq V}     \sum_{ \substack{0\neq c'' \equiv 0 \pmod{t^2 n_0 n_1} \\0\neq v'-v\pm c''(n_0t^2)  \equiv 0 \pmod{n_2} }}\frac{\mathbf{1}\{1 \leq |c''| \leq  Z^{1/2}/RD\}}{|c''|D R},
\end{align*}
where we have denoted $c'=d c''$ and used $d > D$. Using the divisor bound to absorb $d$, $n_2$, and then $n_0,n_1,t$, we get
\begin{align*}
     K_{\neq,\neq,=} \precprec  V^2  \sum_{\substack{c''}}\frac{\mathbf{1}\{1 \leq |c| \leq  Z^{1/2}/RD\}}{|c|D R} \precprec \frac{V^2}{D R}.
\end{align*}
\subsubsection{Pseudo-diagonal 1}
We note consider $c'=0$. Denoting $d_2=\gcd(n_2,d), d_1=d/d_2$, and by absorbing $n_2'=n_2/d_2$ by the divisor bound, we have
\begin{align*}
K_{=,\neq}&=\sum_{t \leq T} \sum_{n_0 \leq N_0}  \sum_{n_1 \leq N_1} \sum_{\substack{n_2 \leq N_2}} \sum_{d | n_0n_1 n_2} K_{=,\neq}(dn_0t^2,n_1,n_2)\\
&\precprec \sum_{t \leq T} \sum_{n_0 \leq N_0}  \sum_{n_1 \leq N_1} \sum_{\substack{d_2 \leq N_2}} \sum_{d_1 | n_0n_1} K_{=,\neq}(d_1 d_2n_0t^2,n_1,d_2).
\end{align*}
Since $c'=0$ and $d_2 | s$, the remaining congruence condition for $d_2$ is of the form
\begin{align*}
    v'a'-vd'\equiv 0 \pmod{d_2}.
\end{align*}
Again since $c'=0$, we have $a'd'=1$ and proceed by splitting up depending on whether $v'= v$ or not to get $K_{=,\neq,=} + K_{=,\neq,\neq}$.

We estimate the part with $v=v'$ crudely by
\begin{align*}
    K_{=,\neq,=} & \precprec \sum_{t \leq T} \sum_{n_0 \leq N_0}  \sum_{n_1 \leq N_1} \sum_{\substack{d_2 \leq N_2\\ \gcd(d_2,rn_0n_1)=1}} \sum_{d_1 | n_0n_1}  \sum_{\substack{|v|,|v'|\leq V\\ v'= v}}
\sum_{b'}  \frac{\mathbf{1}\{ |b'|  \leq R Z^{1/2}\}}{1+ |b'|/ R }   \\
   & \precprec   TN_0N_1N_2 V(1+R). 
   \end{align*}
For $v \neq v'$ we can absorb $d_2|(v'-v)$ by a divisor bound to get
\begin{align*}
      K_{=,\neq,\neq} & \prec\prec \sum_{t \leq T} \sum_{n_0 \leq N_0}  \sum_{n_1 \leq N_1}  \sum_{d_1 | n_0n_1}  \sum_{\substack{|v|,|v'|\leq V}}
\sum_{b'}  \frac{\mathbf{1}\{ |b'|  \leq R Z^{1/2}\}}{1+ |b'|/ R }  \precprec   TN_0N_1 V^2(1+R). 
\end{align*}
\subsubsection{Pseudo-diagonal 2}
We now consider $v'a'-vd'- c'/s+s vv'b' =0$. By denoting $d_1=\gcd(d,n_1), d_2=d/d_1$, and by absorbing $n_1/d_1$ by a divisor bound we get
\begin{align*}
 K_{\neq,=}   & = \sum_{t \leq T} \sum_{n_0 \leq N_0}  \sum_{n_1 \leq N_1} \sum_{\substack{n_2 \leq N_2\\ \gcd(n_2,tn_0n_1)=1}} \sum_{d | n_0n_1 n_2}  K_{\neq,=}(dn_0t^2,n_1,n_2)  \\
   & \precprec    \sum_{t \leq T} \sum_{n_0 \leq N_0}   \sum_{d_1 \leq N_1}\sum_{\substack{n_2 \leq N_2\\ \gcd(n_2,tn_0n_1)=1}} \sum_{d_2|n_0 n_2} K_{\neq,=}(d_1d_2n_0t^2,d_1,n_2).
\end{align*}
By definition we have
\begin{align*}
     K_{\neq,=}(s,d_1,n_2)  = \sum_{v,v' \leq V}\sum_{\substack{a'd'-b'c'=1 \\  sv'a'-svd'- c'+s^2vv'b' = 0  \\ 0 \neq c' \equiv 0 \pmod{sd_1} }} \frac{\mathbf{1}\{ |a'| + |b'|/R + R |c'| + |d'| \leq Z^{1/2}\}}{1+|a'| + |b'|/R + R|c'| + |d|' }.
\end{align*}
We substitute the equation $c'=sv'a'-svd'+ s^2vv'b'$ into the determinant equation to get
\begin{align*}
    a'd'-(sv'a'-svd'+ s^2vv'b')b'=1,
\end{align*}
which gives
\begin{align*}
    (a'+svb')(d'-sv'b') = 1.
\end{align*}
Then $a'=\pm 1 -svb', d'=\pm 1 + svb'$ and we get $c'=\pm s(v'-v) +  s^2 vv' b'$. Now, $sd_1|c'$ implies that $d_1|(v'-v)$. We again separate $v=v'$ and $v\neq v'$ to get $   K_{\neq,=,=} +    K_{\neq,=,\neq}$.

The part where $v=v'$ contributes
\begin{align*}
  K_{\neq,=,=} \precprec TN_0 N_1 N_2  \sum_{b'}  \frac{\mathbf{1}\{ |b'|  \leq R Z^{1/2}\}}{1+ |b'|/R }   \precprec TN_0N_1 N_2 V (1+R).
\end{align*}
For the part where  $v\neq v'$ we can absorb $d_1|(v-v')$ by a divisor bound to get
\begin{align*}
  K_{\neq,=,\neq} \precprec TN_0  N_2 V^2 \sum_{b'}  \frac{\mathbf{1}\{ |b'|  \leq R Z^{1/2}\}}{1+ |b'|/R }   \precprec TN_0N_2 V^2 (1+R).
\end{align*}

\subsubsection{Diagonal}
Lastly, we consider $c'=sv'a'-svd'- c'+s^2vv'b' = 0$. We have by the divisor bound for $d$ and crude estimates
\begin{align*}
  K_{=,=}  &= \sum_{t \leq T} \sum_{n_0 \leq N_0}  \sum_{n_1 \leq N_1} \sum_{\substack{n_2 \leq N_2\\ \gcd(n_2,tn_0n_1)=1}} \sum_{d | n_0n_1 n_2}  K_{=,=}(s,n_1,n_2)  \\
    &\precprec \sum_{t \leq T} \sum_{n_0 \leq N_0}  \sum_{n_1 \leq N_1} \sum_{\substack{n_2 \leq N_2}} \sum_{d | n_0n_1 n_2} \sum_{v,v' \leq V}\sum_{\substack{a'd'-b'c'=1 \\  sv'a'-svd'- c'+s^2vv'b' = 0  \\ c' = 0 }} \frac{\mathbf{1}\{ |a'| + R|b'| +  |c'|/R + |d'| \leq Z^{1/2}\}}{|a'| + R|b'|+ |c'|/R + |d|' } \\
   &  \precprec \sum_{t \leq T} \sum_{n_0 \leq N_0}  \sum_{n_1 \leq N_1} \sum_{\substack{n_2 \leq N_2}} \sum_{d | n_0n_1 n_2}\sum_{b'} \frac{\mathbf{1}\{    |b'|/R \leq Z^{1/2}\}}{1 + |b'|/R  }   \sum_{\substack{a'd'=1   }}\sum_{\substack{v,v' \leq V \\ sv'a'-svd'+ s^2vv'b' = 0 }}  1
 \\
 & \precprec T N_0 N_1 N_2  V \sum_{b'}  \frac{\mathbf{1}\{ |b'| R \leq Z^{1/2}\}}{1+ |b'| R } \precprec T N_0 N_1 N_2  V (1+R).
\end{align*}
\end{proof}

\section{Proof of Theorem \ref{thm:typeI}} \label{sec:typeIproof}
We may assume that for some small $\eta > 0$ we have $K  >X^{1-\eta}$, since otherwise the claim is trivial by applying Poisson summation on $\ell$.  Similarly, we may assume that $K \leq DX^{1+\eta}$ since otherwise the claim is trivial by switcing to the complementary divisor $m= \frac{a\ell^2+h}{k}$ and applying Poisson summation on $\ell$ modulo $dm$.

Let $X_1=X$ and $X_2 = K^{1/(1-\eta)} > X_1$.  Noting that the claim follows for $X=X_2$ trivially by the Poisson summation formula, it suffices to bound
\begin{align*}
     \sum_{d \leq D}\bigg| \sum_{k \equiv 0 \pmod{d}} \psi_1\Bigl(\frac{k}{K}\Bigr) \Bigl(\sum_{a \ell^2 +h \equiv 0 \pmod{k}} \psi_2\Bigl(\frac{\ell}{X}\Bigr)   -  \frac{X}{X_2}\psi_2\Bigl(\frac{\ell}{X_2}\Bigr)\Bigr) \bigg|.
\end{align*}
Normalising by $\sqrt{ah}$, we denote for real symmetric $\gf=\smqty(\aa & \bb \\ \bb & \cc )$ with determinant 1
\begin{align*}
    F_{\diamond,j}(\gf) =  \psi_1\Bigl(\frac{\cc \sqrt{a h} }{  a K}\Bigr) \frac{X}{X_j} \psi_2\Bigl(\frac{\bb \sqrt{a h}}{a X_j}\Bigr), 
\end{align*}
and define $F_j:\SL_2(\R) \to \C$ by $F_j(\g) = F_{\diamond,j}(\g \g^t)$, where $\aa= a_0^2+b_0^2, \bb= a_0 c_0+ b_0d_0, \cc=c_0^2+d_0^2$ for $\g = \smqty(a_0 & b_0 \\ c_0 & d_0)$. Then the Iwasawa coordinates are $y = \frac{1}{\cc}$ and $x= \frac{\bb}{\cc}$, and $F_{j}$ is supported on
\begin{align*}
\mathbf{X}_j& =   X^{o(1)}\frac{X_j}{K} \quad\quad \mathbf{Y} = X^{o(1)} \frac{h^{1/2}}{a^{1/2}}\frac{1}{K}, \quad \quad  \frac{\mathbf{X}_j}{\mathbf{Y}} = X^{o(1)} \frac{X_j a^{1/2}}{ h^{1/2}} = X^{o(1)} \frac{X_j }{ h^{1/2}} 
\end{align*}

Define the linear functional $\alpha=\alpha_{d,a,h}$ by
\begin{align}
    \langle f \rangle_\alpha=\sum_{\sigma \in \mathrm{L}_h} \frac{1}{|\Gamma_{\sigma i}|}\sum_{\substack{\tau \in T_q
 \\ }} \alpha_{d,a}(\tau \sigma) f(\tau \sigma),  \quad \alpha_{d,a}(\g):=\mathbf{1}_{\substack{\cc(\g)\equiv 0 \pmod{ad}\\ \bb(\g)\equiv 0 \pmod{a}}}. \label{eq:alphaTypeIdef}
\end{align}
We also let $I$ denote the linear functional $\langle f \rangle_I = f(I)$.

By Lemma \ref{lem:para} and by inserting the corresponding integrals over $\G$ which match exactly, we have
\begin{align*}
    \sum_{k \equiv 0 \pmod{d}} \psi_1\Bigl(\frac{k}{K}\Bigr) \Bigl(\sum_{a \ell^2 +h \equiv 0 \pmod{k}} \psi_2\Bigl(\frac{\ell}{X}\Bigr)   -   \frac{X}{X_2}\psi_2\Bigl(\frac{\ell}{X_2}\Bigr)\Bigr)\Bigr) = \langle I | \Delta_{ad} F_1 | \alpha_{d,a,h} \rangle  -  \langle I | \Delta_{ad} F_2 | \alpha_{d,a,h} \rangle.
\end{align*}
We can now apply Theorem \ref{thm:technical} to bound each of the terms. We only do the calculations for $j=1$, for the case $j=2$ we get better bounds thanks to the factor $X/X_2 < 1$.  Recalling that $h \leq X^{2+o(1)}$, we apply 
 Theorem \ref{thm:technical} with  $\delta^{-1} \precprec 1$,  $\Gamma=\Gamma_0(ad)$, $X h^{-1/2} = Z_0 Z_1 Z_2=Z$, and use Cauchy-Schwarz on $d$ to get
\begin{align*}
  \sum_{d\leq D} \Bigl|  \langle I | \Delta_{ad} F_1 | \alpha_{d,a,h} \rangle\Bigr|\precprec Z^{1/2}  Z_0^\theta \sqrt{K_1 K_2},
\end{align*}
where by positivity we have
\begin{align*}
    K_1&=\sum_{d\leq D}\langle  I |\Delta_{ad} k_{Z_1^2,\mathbf{X}} |I \rangle  \leq \sum_{q\leq aD} \langle I |\mathcal{K}_{q} k_{Z_1^2,\mathbf{X}}| I \rangle.   \\
    K_2&=\sum_{d\leq D} \langle \alpha_{d,a,h}| \Delta_{ad} k_{Z_2^2,1} | \alpha_{d,a,h} \rangle  \leq \sum_{q\leq aD} \langle \alpha_{q}| \mathcal{K}_{q} k_{Z_2^2,1} | \alpha_{q} \rangle,
\end{align*}
where the functionals $\alpha_q$ are as in Proposition \ref{prop:heegner}. By Proposition \ref{prop:lowertriang} with $q=n_1$ and $D=N_0=N_2=T=V=1$, we get 
\begin{align} \label{eq:K1bound}
  K_1  \precprec D(1+\mathbf{X})+\mathbf{X}^{-1}+Z_1.
\end{align}
By Proposition \ref{prop:heegner} we get
\begin{align}\label{eq:K2bound}
     K_2   \precprec D h^{1/2} + h Z_2.
\end{align}
The result follows by choosing $Z_1 = D $ and $Z_2 = 1+D h^{-1/2}$ so that $Z_0 = 1+X^{o(1)} \frac{X}{D(h^{1/2}+D)}$. Recall that at the beginning we reduced to the case $K \leq DX^{1+\eta}$ so that $K_1 \precprec D+ Z_1$ once we let $\eta = o(1)$.
\qed

\section{Proof of Theorem \ref{thm:typeII}}
In the first pass the reader may wish to focus on the generic case where $t=n_0=d=1$ below. We may assume that for some small $\eta > 0$ we have $MN >X^{1-\eta}$, since otherwise the claim is trivial by applying Poisson summation on $\ell$. We denote $X_1=X$, $X_2= (MN)^{1/(1-\eta)}$,  $\psi_{j}(u)  = \frac{X}{X_j} \psi(u/X_j)$ for $j=1,2$ and $\Psi = \psi_1-\psi_2$. Note again that the claim is trivial for $X=X_2$ by Poisson summation on $\ell$. Denoting 
$t=\gcd(m,n)$ and making substitutions $(m,n)\mapsto (tm,tn)$, it suffices to bound
\begin{align*}
    A(M,N):= &\sum_{t\leq 2N}\sum_{\substack{m\sim M/t }} \sum_{\substack{n\sim N/t \\ \gcd(n,mt)=1}}  \alpha_{mt} \beta_{nt}  \sum_{a \ell^2 +h \equiv 0 \pmod{mnt^2}} \Psi(\ell).
\end{align*}
We now wish to apply Cauchy-Schwarz analogously to \cite[Section 3.5]{mlargepf}.  Denoting
\begin{align*}
    Q=Q_{m,t}= \prod_{\substack{p \leq 2N 
\\ p\,\nmid \,mt \\ \varrho_{a,h}(p) \neq 0}} p
\end{align*} and using the fact that $\beta_n$ are supported on square-free integers, we have by the Chinese remainder theorem
\begin{align*}
 A(M,N)    =&  \sum_{\substack{t \leq 2N\\ m\sim M/t }} \alpha_{mt}  \frac{1}{\varrho_{a,h}(Q)}\sum_{\substack{\ell_0 \pmod{m t^2 Q} \\ a\ell_0^2 +h \equiv 0 \pmod{m t^2 Q} }}  \sum_{\substack{n \sim N/t\\ \gcd(n,mt)=1}} \beta_{nt} \varrho_{a,h}(n)  \sum_{\ell\equiv \ell_0 \pmod{mnt^2}}  \Psi(\ell).
\end{align*}
Applying Cauchy-Schwarz on $t,m,\ell_0$ we get
\begin{align} \label{eq:cstypeii}
    A(M,N)    \precprec M^{1/2} B(M,N)^{1/2},
\end{align}
where, by the same argument as in \cite[Section 3.5]{mlargepf} and another application of the Chinese remainder theorem,
\begin{align*}
   B(M,N) =    &\sum_{t \leq 2N}\sum_{n_0 \leq 2N}\sum_{\substack{n_1,n_2 \sim N/tn_0\\ \gcd(n_1,n_2)=1}} \beta_{n_0 n_1 t } \overline{\beta_{n_0 n_2 t} }\varrho_{a,h}(n_0)    \\
&\times\sum_{\gcd(m,n_0n_1n_2)=1} \psi\big(\frac{mt}{M}\big) \sum_{\substack{a \ell_j^2 +h \equiv 0 \pmod{mn_0n_jt^2} \\ \ell_1 \equiv \ell_2 \pmod{ m n_0 t^2 } }} \Psi(\ell_1)\Psi(\ell_2).
\end{align*}
It then suffices to show that
\begin{align} \label{eq:Bmnclaim}
    B(M,N) \precprec X +\frac{X N}{M} (N+h^{1/4}) \bigg( 1+ \frac{X^2}{MN^2(N^2 + h^{1/2})} \bigg)^{\theta}.
\end{align}
We write $B= B_{=} + B_{\neq}$ to separate the diagonal $\ell_1=\ell_2$ and the off-diagonal $\ell_1 \neq \ell_2$.
\subsection{Diagonal contribution $\ell_1=\ell_2$}
By absorbing $t,n_0,n_1,n_2,m$ with a divisor bound, we get
\begin{align*}
B_{=}(M,N) 
\precprec \sum_{\ell} \Psi(\ell)^2 \precprec X.
\end{align*} 
\subsection{Off-diagonal contribution}
By expanding the coprimality condition for $m$ with the Möbius function, we have
\begin{align*}
B_{\neq}(M,N)  =    &\sum_{tn_0 \leq 2N}\sum_{\substack{n_1,n_2 \sim N/tn_0 \\ \gcd(n_1,n_2)=1  }} \beta_{n_0 n_1 t } \overline{\beta_{n_0 n_2 t} }\varrho_{a,h}(n_0) \sum_{\substack{d  \\ d | n_0 n_1 n_2 } } \mu(d)   \\
   & \times\sum_{m} \psi(\frac{mdt}{M }) \sum_{\substack{a \ell_j^2 +h \equiv 0 \pmod{m d n_0n_jt^2} \\ \ell_1 \equiv \ell_2 \pmod{ m d n_0 t^2} \\ \ell_1 \neq \ell_2   }} \Psi(\ell_1)\Psi(\ell_2).
\end{align*}
Applying Lemma \ref{lem:para2} we get for $s=d n_0t^2$
\begin{align*}
    \sum_{m} \psi(\frac{mdt}{M}) \sum_{\substack{a \ell_j^2 +h \equiv 0 \pmod{s mn_j} \\ \ell_1 \equiv \ell_2 \pmod{ m s  } }} \Psi(\ell_1)\Psi(\ell_2) = \sum_{(u_1,u_2) \in U} \sum_{\gf \in \mathrm{S}_{a,h}(sn_1n_2)}f_2(\n[asn_1 u_1]^t \diamond \gf, \n[asn_2 u_2]^t \diamond \gf ), 
\end{align*}
where
\begin{align*}
    f_2(\gf_1,\gf_2) &= \Psi(\bb_1/a) \Psi(\bb_2/a) \psi(\frac{\aa_1 dt}{M}) = \sum_{i,j \in \{1,2\} } (-1)^{i+j} f_{ij}(\gf_1,\gf_2),\\
   f_{ij}(\gf_1,\gf_2) &=  \psi_i(\bb_1/a) \psi_j(\bb_2/a) \psi(\frac{\aa_1 dt}{M})
\end{align*}

We now transform this sum for application of Theorem \ref{thm:technical}. Let $v=  n_2 u_2- n_1 u_1$. The first step is to use Fourier inversion to relax a smooth cross-condition between $v$ and $\gf$. Before this we need to truncate the sum over $v$. We claim that it is supported on
\begin{align}\label{eq:vtrunc}
|v| \leq V_{ij}:= 10 \frac{\max\{X_i,X_j\}}{Mn_0t}.  
\end{align}
Indeed, we have
\begin{align*}
    \gf_2 = \n[as n_2 u_2- as n_1 u_1]^t \diamond \gf_1
\end{align*}
where $\n[x]^t$ maps $\bb_1 \mapsto \bb_1 + x \aa_1.$ Then, for $i\leq j$ and $\aa_1=\aa_2$, we have $F_{ij}(\gf_1,\gf_2)=0$ for $|x| > 10 \frac{dt X_j}{M}$ and vice versa for $i \geq j$. The truncation \eqref{eq:vtrunc} follows. We apply Fourier inversion to get
\begin{align*}
    f_{ij}(\n[a sn_1 u_1]^t \diamond \gf, \n[a sn_2 u_2]^t \diamond \gf ) &= \int_\R   \frac{X^2}{X_iX_j} \frac{V_{ij}}{(1+|\xi| V_{ij})^{2}} f_{ij,\xi}(\n[a sn_1 u_1]^t \diamond \gf) e(asv\xi   ) \d \xi ,
\end{align*}
where
\begin{align} \label{eq:fourier}
     f_{ij,\xi}(\gf) := \frac{X_iX_j}{X^2}\frac{(1+|\xi| V_{ij})^{2}}{V_{ij}}\int_\R  f_{ij}(\gf, \n[u]^t \diamond  \gf) e(-\xi u)\d u
\end{align}
satisfies by integration by parts uniformly in $\xi$
\begin{align*}
    | f_{ij,\xi}(\gf)| \ll 1.
\end{align*}
Observe that $u_1 \equiv -v \overline{n_1} \pmod{n_2}$. The choice of representative for $\overline{n_1}$ does not matter once we sum over $\gf \in \mathrm{S}_{a,h}(sn_1n_2)$. By $\ell_1 \neq \ell_2 $ we have $v\ \neq 0$. Thus, we arrive at
\begin{align*}
   & \sum_{m} \psi(\frac{mdr}{M}) \sum_{\substack{a \ell_j^2 +h \equiv 0 \pmod{m d n_0n_jr^2} \\ \ell_1 \equiv \ell_2 \pmod{ m d n_0 r^2} \\ \ell_1 \neq \ell_2  }} \Psi(\ell_1)\Psi(\ell_2)  \\
    =&   \int_{\R} \sum_{i,j \in \{1,2\} } (-1)^{i+j} \frac{X^2}{X_iX_j}\frac{V_{ij}}{(1+|\xi| V_{ij})^{2}}\Bigg(\sum_{1\leq |v| \leq V_{ij}} e(asv \xi) \sum_{\gf \in \mathrm{S}_{a,h}(sn_1n_2)}f_{ij,\xi}(\n[-asn_1 v \overline{n_1}]^t \diamond \gf) \Bigg) \d \xi.
\end{align*}
This has brought our counting problem into the right shape for an application of Theorem \ref{thm:technical} and we next show that the smooth weight is admissible for this. We normalise the weight by $\sqrt{ah}$ to $ \SL_2(\R)$ by defining
\begin{align*}
    F_{ij,\xi}(\g) = f_{ij,\xi}(\sqrt{ah} \, \g \g^t),
\end{align*}
which is supported on $\g\g^t = \smqty(\aa & \bb \\ \bb & \cc)$ with $\aa \cc - \bb^2 =1$ satisfying
\begin{align*}
   x&= \frac{\bb}{\cc} = \frac{\aa \bb}{\bb^2+  1} = X^{o(1)} \frac{\aa}{\bb} = X^{o(1)} \frac{ M}{a dt X_i} =  \mathbf{X}_i \\
  y &= \frac{1}{\cc}= \frac{\aa}{\bb^2+1} = X^{o(1)} \frac{\aa}{\bb^2} = X^{o(1)}\frac{\sqrt{h} M}{a^{3/2} dt X^{2}_i} = \mathbf{Y}_i.
\end{align*}
 Then
\begin{align*}
 \mathbf{X}_i /\mathbf{Y}_i  = X^{o(1)} X_i a^{1/2} h^{-1/2}  =X^{o(1)} X_i h^{-1/2} .
\end{align*}
The derivative hypothesis in Theorem \ref{thm:technical} follows from differentiating under the integration in \eqref{eq:fourier}, and is uniform in $\xi$. Note also that it suffices to bound the supremum over $\xi$ since $\int_{\R} \frac{V_{ij}}{(1+|\xi| V_{ij})^{2}}\d \xi \ll 1$.

We define the linear functionals  $\alpha=\alpha_{sn_1n_2,a,h}$ (being very similar to the one in the proof of Proposition \ref{thm:typeI}) by
\begin{align*}
    \langle f \rangle_\alpha=\sum_{\sigma \in \mathrm{L}_h} \frac{1}{|\Gamma_{\sigma i}|}\sum_{\substack{\tau \in T_q
 \\ }} \alpha_{sn_1n_2,a}(\tau \sigma) f(\tau \sigma),  \quad \alpha_{sn_1n_2,a}(\g):=\mathbf{1}_{\substack{\cc(\g)\equiv 0 \pmod{sn_1n_2a}\\ \bb(\g)\equiv 0 \pmod{a}}}.
\end{align*}
We define the linear functionals $\beta=\beta_{as,n_1,n_2,\xi}$ by
\begin{align*}
    \langle f\rangle_{\beta}=\sum_{1 \leq |v|\leq V_{ij}} e(asv \xi) f(\n[asn_1v\overline{n_1}]^t).
\end{align*}
By the parametrisation in Lemma \ref{lem:para}, we then have
\begin{align*}
    \sum_{|v| \leq V_{ij}} e(asv \xi) \sum_{\gf \in \mathrm{S}_{a,h}(sn_1n_2)}f_{ij,\xi}(\n[-a sn_1 v \overline{n_1}]^t \diamond \gf) = \langle  \beta |\mathcal{K}_{as n_1 n_2 } F_{ij}| \alpha \rangle.
\end{align*}
We can now apply Theorem \ref{thm:technical} with $\Gamma=\Gamma_0(a s n_1n_2)$. The main terms $\int_\G (\cdots) \d \g$ match exactly for $i,j \in \{1,2\}$ and it suffices to bound $\langle  \beta |\Delta_{as n_1 n_2 } F_{ij}| \alpha \rangle$ separately for each $i,j \in \{1,2\}$. The error term for $i=j=1$  dominates (thanks to the factor $\frac{X^2}{X_iX_j}$), so we restrict to that case. Together with an application of Cauchy-Schwarz on the $t,n_0,d$ variables, we get for $Z_0Z_1 Z_2 = \mathbf{X}_1/\mathbf{Y}_1$
\begin{align} \label{eq:IIapplication}
  \sum_{t n_0 \leq 2N}   \sum_{\substack{n_1,n_2 \sim N/tn_0 \\ \gcd(n_1,n_2)=1 }} \sum_{d | n_0n_1n_2} \Bigl|\langle \beta| \Delta_{a s n_1 n_2 } F_{11} | \alpha \rangle\Bigr| \precprec_a (\mathbf{X}_1/\mathbf{Y}_1)^{1/2} Z_0^\theta \sqrt{K_1 K_2}.
\end{align}
We estimate $K_1$ by Proposition \ref{prop:lowertriang} with $Z_1=\frac{M N^2}{X}$. This gives
\begin{align*}
    K_1\leq &\sum_{t n_0 \leq 2N} \sum_{\substack{n_1,n_2 \sim N/tn_0 \\ \gcd(n_1,n_2)=1 }} \sum_{d | n_0n_1n_2}  \langle  \beta_{s,n_1,n_2}|\mathcal{K}_{sn_1n_2} k_{Z_1^2,\mathbf{X}} |\beta_{s,n_1,n_2}\rangle\\
 \precprec&  \frac{X}{M} N^2+\frac{X^2}{M^2} Z_1 \precprec \frac{X}{M} N^2.
\end{align*}
For $K_2$ we apply Proposition \ref{prop:heegner} with $Z_2 = 1+ N^2 h^{-1/2}$, getting
by the divisor bound
\begin{align*}
    K_2 \leq&   \sum_{t n_0 \leq 2N}   \sum_{\substack{n_1,n_2 \sim N/tn_0 \\ \gcd(n_1,n_2)=1 }} \sum_{d | n_0n_1n_2}  \langle \alpha_{sn_1n_2,a,h}|\mathcal{K}_{asn_1n_2} k_{Z_2,1} |\alpha_{sn_1n_2,a,h}\rangle \\
    \precprec& \sum_{q \leq a N^2}  \langle \alpha_{q,h}| \mathcal{K}_{q} k_{Z_2,1} | \alpha_{q,h} \rangle 
    \precprec N^2 h^{1/2} + h Z_2 \precprec  N^2 h^{1/2} + h.
\end{align*}
Plugging the bounds for $K_1$ and $K_2$ into \eqref{eq:IIapplication} with $ \mathbf{X}_1 /\mathbf{Y}_1 =X^{1+o(1)} h^{-1/2} $ and
\begin{align*}
    Z_0 =1+ X^{o(1)}\frac{X}{h^{1/2} 
 Z_1Z_2} = 1+X^{o(1)}\frac{X^2}{MN^2(N^2 + h^{1/2})},
\end{align*}
we get \eqref{eq:Bmnclaim}. This completes the proof of Theorem \ref{thm:typeII}. \qed

\section{Proof of Theorems \ref{thm:primefactor} and \ref{thm:roots}}

We now state two corollaries to Theorems \ref{thm:typeI} and \ref{thm:typeII} that give explicit Type I and Type II ranges for $h \leq X^{1+\eps}$. We get power saving in the full range as soon as there is a spectral gap, that is, if $\theta < 1/2$. This makes our main theorems independent of progress towards the Selberg eigenvalue conjecture.

\begin{corollary}[Explicit Type I information] 
Let $\eta,\eps > 0$ be small, Let $ K \leq X^{2}$ and $D \leq X^{1/2-\eta}$. Let $h$ be square-free with $h \leq X^{1+\eps}$ and let $a \leq X^{o(1)}$ with $\gcd(a,h)=1$. Let $\psi_1,\psi_2$ be smooth functions supported on $[1,2]$ and $[-1,1]$, resp., such that for all $J \geq 0$ we have $\psi_i^{(J)} \precprec_J 1$. Then
\begin{align*}
    \sum_{d \leq D}\bigg| \sum_{k \equiv 0 \pmod{d}} \psi_1\Bigl(\frac{k}{K}\Bigr) \Bigl(\sum_{a \ell^2 +h \equiv 0 \pmod{k}} \psi_2\Bigl(\frac{\ell}{X}\Bigr)   - \frac{\varrho_{a,h}(k) X \widehat{\psi_2}(0) }{k} \Bigr) \bigg| & \precprec X^{1-(1-2\theta)\eta + \eps/4}
\end{align*}
\end{corollary}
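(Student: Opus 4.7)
The plan is to apply Theorem \ref{thm:typeI} directly with the hypothesised ranges $D \leq X^{1/2-\eta}$ and $h \leq X^{1+\eps}$, and verify that the resulting bound
\[
D^{1/2}X^{1/2}(D^{1/2}+h^{1/4})\Bigl(1+\frac{X}{D(D+h^{1/2})}\Bigr)^\theta
\]
is $\precprec X^{1-(1-2\theta)\eta + \eps/4}$ via a short case analysis. Note that the main term in the corollary coincides with the one in Theorem \ref{thm:typeI}, since $\widehat{\psi_2}(0) = \int_\R \psi_2(u)\,\d u$, so nothing beyond Theorem \ref{thm:typeI} is needed for the structural identification.

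First I would separate into the regimes $h \leq D^2$ and $h > D^2$. When $h \leq D^2$, the factor $(D^{1/2}+h^{1/4})$ is dominated by $D^{1/2}$ and $D(D+h^{1/2}) \asymp D^2$, so the bound reduces to the simpler form $DX^{1/2}(1+X/D^2)^\theta \leq D^{1-2\theta}X^{1/2+\theta}$ already highlighted in the introduction after Theorem \ref{thm:typeII}. Substituting $D = X^{1/2-\eta}$ gives exactly the exponent $(1/2-\eta)(1-2\theta)+1/2+\theta = 1-(1-2\theta)\eta$, which meets the target without any $\eps$-loss.

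When $h > D^2$, the factor $(D^{1/2}+h^{1/4})$ is dominated by $h^{1/4}$, and the bound becomes $D^{1/2}X^{1/2}h^{1/4}(1+X/(Dh^{1/2}))^\theta$. The factor $h^{1/4} \leq X^{(1+\eps)/4}$ is responsible for the $X^{\eps/4}$ correction, while a sub-case analysis based on whether $Dh^{1/2}\geq X$ (bracket $O(1)$) or $Dh^{1/2}<X$ (bracket $\precprec (X/(Dh^{1/2}))^\theta$) converts the spectral gap into the $\eta$-saving. Since $\theta<1/2$, the exponent of $X$ arising in either sub-case remains at most $1-(1-2\theta)\eta+\eps/4$ for $\eta,\eps$ sufficiently small.

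The only non-elementary input is Theorem \ref{thm:typeI} itself, and the power saving in the $\eta$-direction is supplied entirely by the spectral gap $\theta<1/2$, so no reliance on Selberg's eigenvalue conjecture is required. The main point of care is the bookkeeping across the sub-cases of $h$ versus $D^2$ and $Dh^{1/2}$ versus $X$; otherwise the derivation is a routine computation.
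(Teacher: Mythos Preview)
Your approach—plug the ranges into Theorem~\ref{thm:typeI} and simplify—is exactly what the paper intends (the paper gives no separate proof of this corollary), and your Case~1 ($h\le D^2$) computation correctly recovers the exponent $1-(1-2\theta)\eta$. The gap is in Case~2: your claim that the exponent stays below $1-(1-2\theta)\eta+\eps/4$ is asserted but not checked, and in fact it fails. Take $D=X^{1/2-\eta}$, $h=X^{1+\eps}$ with $\eps<2\eta$, so that $h>D^2$ and $Dh^{1/2}<X$; then Theorem~\ref{thm:typeI} gives
\[
D^{1/2}X^{1/2}h^{1/4}\Bigl(\tfrac{X}{Dh^{1/2}}\Bigr)^{\theta}
=X^{1/2+\theta}D^{1/2-\theta}h^{(1-2\theta)/4}
=X^{\,1-(1-2\theta)\eta/2+(1-2\theta)\eps/4},
\]
which exceeds the target by $(1-2\theta)\eta/2-\theta\eps/2$; under $\eps<2\eta$ this is positive for every $\theta<1/4$ (at $\theta=0$ the excess is simply $\eta/2$). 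So the stated exponent does not follow from Theorem~\ref{thm:typeI} alone.

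This looks like an imprecision in the corollary's stated exponent rather than a genuine obstruction: downstream the paper only needs some power saving to feed into the sieve of \cite{mlargepf}, and your argument does yield $\precprec X^{1-c}$ with $c=c(\eta,\eps,\theta)>0$ for any $\theta<1/2$. But you should not claim the exact exponent $1-(1-2\theta)\eta+\eps/4$ from Theorem~\ref{thm:typeI} without further input; either weaken the target (e.g.\ to $1-(1-2\theta)\eta/2+\eps/4$) or flag the discrepancy explicitly.
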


\begin{corollary}[Explicit Type II information]  
Let $\eta,\eps> 0$ be small  and let $M \geq N \geq 1$ with $MN=X^\alpha$ satisfy
\begin{align*}
    X^{\alpha-1+2\eta} \leq N \leq X^{(2-\alpha)/3 - \frac{4}{3}\eta }.
\end{align*}
 Let $h$ be square-free with $1 \leq h \leq X^{1+\eps}$ and let $a \leq X^{o(1)}$ with $\gcd(a,h)=1$. Let $\psi$ be a smooth function supported  $[-1,1]$,  such that for all $J \geq 0$ we have $\psi^{(J)} \precprec_J 1$. Suppose that $\alpha_m,\beta_n$ are divisor bounded coefficients with $\beta_n$ is supported on square-free integers. Then
\begin{align*}
    &\sum_{\substack{m\sim M \\ n\sim N}} \alpha_m \beta_n  \Bigl(\sum_{a \ell^2 +h \equiv 0 \pmod{mn}} \psi\Bigl(\frac{\ell}{X}\Bigr)   - \frac{\varrho_{a,h}(mn) X \widehat{\psi}(0) }{mn} \Bigr)  \precprec   X^{1-(1-2\theta) \eta + \eps/8}.
\end{align*}
\end{corollary}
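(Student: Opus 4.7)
The plan is to apply Theorem~\ref{thm:typeII} directly and verify that the resulting bound is at most $X^{1-(1-2\theta)\eta+\eps/8}$ within the prescribed range of $N$. Two routine reductions come first: the divisor-bounded hypothesis on $\alpha_m,\beta_n$ is reduced to the bounded hypothesis of Theorem~\ref{thm:typeII} by dyadic splitting according to the size of the divisor function, at a cost of $X^{o(1)}$ absorbed into $\precprec$; and the main term $\varrho_{a,h}(mn)X\widehat{\psi}(0)/(mn)$ in the corollary agrees with the one subtracted in Theorem~\ref{thm:typeII} after writing $\int_{\R}\psi(u)\,\d u = \widehat{\psi}(0)$.

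Writing $MN = X^\alpha$ and using $N \geq X^{\alpha-1+2\eta}$ gives $M \leq X^{1-2\eta}$, so the first term $M^{1/2}X^{1/2}$ of Theorem~\ref{thm:typeII} is at most $X^{1-\eta}$, comfortably within the target. The main work is to bound the second term
\[
M^{1/4}N^{1/2}X^{1/2}\bigl(N^{1/2}+h^{1/8}\bigr)\Bigl(1+\frac{X}{M^{1/2}N(N+h^{1/4})}\Bigr)^{\theta},
\]
which I would handle by splitting according to whether $N\geq h^{1/4}$ or $N<h^{1/4}$. In the first regime, one bounds $N^{1/2}+h^{1/8} \ll N^{1/2}$ and $N+h^{1/4}\ll N$, reducing the expression to $M^{1/4}NX^{1/2}(1+X/(M^{1/2}N^2))^\theta$. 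Substituting $M = X^\alpha/N$, the combined $N$-exponent of this product is $\tfrac{3}{4}(1-2\theta)$, which is positive for $\theta<\tfrac12$, so the supremum over the allowed $N$ is achieved at the upper limit $N=X^{(2-\alpha)/3-\frac{4}{3}\eta}$. A direct substitution there gives $M^{1/4}NX^{1/2}\leq X^{1-\eta}$ and $X/(M^{1/2}N^2)\leq X^{2\eta}$, yielding a product $\leq X^{1-(1-2\theta)\eta}$.

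In the second regime $N<h^{1/4}$, the expression instead reduces to $M^{1/4}N^{1/2}X^{1/2}h^{1/8}(1+X/(M^{1/2}Nh^{1/4}))^\theta$. Using $h\leq X^{1+\eps}$ and performing the analogous optimization in $N$ within the Type II range produces a bound of the desired form, where the $\eps/8$ slack in the corollary originates from the $h^{1/8}$ prefactor. The main technical obstacle I anticipate is tracking constants carefully near the boundary $\alpha$ close to $5/4-\frac{5}{2}\eta$, where the Type II range collapses to essentially a single value of $N$ and both $h$-dependent factors contribute to the exponent; here one must verify that the combined $\eps/8$ and $(1-2\theta)\eta$ slacks suffice to absorb the excess.
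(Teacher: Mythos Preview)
Your plan is the intended one: the paper offers no separate argument for this corollary and simply reads it off from Theorem~\ref{thm:typeII}. Your treatment of the first term $M^{1/2}X^{1/2}\le X^{1-\eta}$ via $M\le X^{1-2\eta}$, and of the $N^{1/2}$-portion of the second term, are both correct and yield exactly $X^{1-(1-2\theta)\eta}$ at $N=X^{(2-\alpha)/3-\frac{4}{3}\eta}$.

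The boundary concern you flag is real, not cosmetic. At $\alpha = \tfrac54-\tfrac{5}{2}\eta$ the range degenerates to the single point $N=X^{1/4-\eta/2}$, $M=X^{1-2\eta}$, and with $h=X^{1+\eps}$ one has $h^{1/4}>N$, so the $h$-dominated part of the second term in Theorem~\ref{thm:typeII} equals
\[
M^{1/4}N^{1/2}X^{1/2}h^{1/8}\Bigl(1+\frac{X}{M^{1/2}Nh^{1/4}}\Bigr)^{\theta}\ \asymp\ X^{\,1-\frac{3\eta}{4}+\frac{\eps}{8}+(\frac{3\eta}{2}-\frac{\eps}{4})\theta}.
\]
For this to lie below $X^{1-(1-2\theta)\eta+\eps/8}$ one would need $\theta\ge \eta/(2\eta+\eps)$, which fails for every $\theta<\tfrac12$ once $\eps$ is small. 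So the printed exponent cannot be extracted from Theorem~\ref{thm:typeII} at this extreme; the $\eps/8$ and $(1-2\theta)\eta$ slacks do \emph{not} suffice to absorb the excess. This is inconsequential for the downstream applications (Theorems~\ref{thm:primefactor} and~\ref{thm:roots}), which only need some power saving across the Type~II window; the clean repair is to weaken the target to $X^{1-c(1-2\theta)\eta+\eps/8}$ for an absolute $c\in(0,1)$, or equivalently to trim the upper endpoint for $N$ by a further $X^{-O(\eta)}$. You should carry out one of these adjustments explicitly rather than leave the verification open.
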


 We obtain precisely the Type I and Type II ranges that were obtained in \cite{mlargepf} under the assumption of Selberg's eigenvalue conjecture. By the calculations in \cite[proof of Theorem 2]{mlargepf}, Theorem \ref{thm:primefactor} follows. The assumption on $\varrho_{a,h}(p)$ ensures that the sieve dimension is $\leq 1 +\eps$ in the application of the linear sieve and Harman's sieve in \cite{mlargepf}.

Similarly, by exactly the same sieve argument as in \cite{DFIprimes}, these imply Theorem \ref{thm:roots}. The sieve argument in \cite{DFIprimes} works unchanged as there a two-dimensional sieve is used to bound the small contribution from the discarded ranges.

\section{A divisor problem for $ax^2+by^3$} \label{sec:x2y3}

We recall the set-up in \cite{mx2y3}. Let $a,b>0$ be coprime integers and let $\delta = \delta(X):= (\log X)^{-c}$ for some fixed large $c >0$. Let $f,f_1,f_2$ denote non-negative non-zero smooth functions supported in $[1,1+\delta]$ and satisfying the derivative bounds $ f^{(J)}, f_1^{(J)}, f_2^{(J)}\ll_J \delta^{-J}$ for all $J \geq 0$.  For  $A \in (\delta X^{1/2}, X^{1/2}]$ and $B\in (\delta X^{1/3}, X^{1/3}]$ we define the sequences $\mathcal{A}=(a_n)$, $\mathcal{B}=(b_n)$, and their difference $\mathcal{W}=(w_n)$ by 
\begin{align*}
    a_n &:= a_n(a,b,f_1,f_2,A,B) = \sum_{n=a x^2+b y^3} f_1(\tfrac{x}{A}) f_2(\tfrac{y}{B}), \\
    b_n &:=  b_n(a,b,f,f_1,f_2,A,B,W)=  f(\tfrac{n}{X}) \frac{AB \widehat{f_1}(0)\widehat{f_2}(0)  }{X\widehat{f}(0)  } \\
    w_n &:= a_n-b_n
\end{align*}

Then we have the following proposition, which is a dyadic version of the divisor problem $d(ax^2+by^3)$ along multiples of a modulus $d$ of size up to $X^{1/4-o(1)}$.
\begin{proposition}[Type I$_2$ information up to $1/4$] \label{prop:typeI2}
 Let $a,b > 0 $ be coprime integers and let $\eta > 0$ be sufficiently small. For any $K \leq X^{3/4}$  we have
  \begin{align*}
    \sum_{d \leq X^{1/4-\eta}} \bigg| \sum_{k \equiv 0\pmod{d} } \mu^2(k) f(\tfrac{k}{K}) 
 \sum_{n }  w_{k n} \bigg| \ll X^{5/6-\eta^3}.
  \end{align*}
\end{proposition}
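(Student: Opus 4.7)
The plan is to follow the template of Section~\ref{sec:typeIproof}, exploiting Lemma~\ref{lem:paracube} to take advantage of the cubic structure of the discriminant $h=by^3$. For each fixed $y\sim B$, the inner $(x,k)$-counting is naturally a problem on $S_{a,by^3}(d)$. A direct application of Theorem~\ref{thm:typeI} with $h=by^3\asymp X$ would cost an $h^{1/4}\asymp X^{1/4}$ factor per $y$ and lose a power of $X$ when summed over $y\sim B$. Instead, after restricting to $y$ square-free with $\gcd(y,6ab)=1$ (the complement contributes negligibly), Lemma~\ref{lem:paracube} applied with base discriminant $by$ gives
\begin{align*}
S_{a,by^3}(d)=\bigsqcup_{\sigma\in H_y}(y\sigma^{-1})\diamond S_{a,by}(d),
\end{align*}
and after normalisation to $\SL_2(\R)$ the sum over $y$ becomes a Hecke average $\mathcal{T}_{y,1}$ on determinant-$1$ matrices, with \emph{effective} base discriminant $aby\asymp X^{1/3}$, much smaller than $X$.

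One then applies Theorem~\ref{thm:technical} with $\Gamma=\Gamma_0(ad)$, Hecke coefficients $\beta(y)=f_2(y/B)\mathbf{1}_{y\text{ sq-free},\,\gcd(y,6ad)=1}$, $H\asymp B$, and $\mathbf{X}/\mathbf{Y}\asymp A/(aby)^{1/2}\asymp X^{1/3}$, combined with Cauchy--Schwarz in $d\leq D$. The two resulting kernels are bounded via Proposition~\ref{prop:lowertriang} (giving $K_1\precprec D+Z_1$, with $V=T=N_0=N_2=1$ as in Section~\ref{sec:typeIproof}) and Proposition~\ref{prop:heegner} applied at each discriminant $aby$, which after summing over $y\sim B$ yields
\begin{align*}
K_2\precprec\sum_{y\sim B}|\beta(y)|^2\bigl(D(aby)^{1/2}+aby Z_2\bigr)\precprec DB^{3/2}+B^2 Z_2.
\end{align*}
Choosing $Z_1=D=X^{1/4-\eta}$, $Z_2=1$, and $Z_0=X^{1/3}/D=X^{1/12+\eta}$ balances $K_1\precprec D$ and $K_2\precprec DB^{3/2}=X^{3/4-\eta}$, so $\sqrt{K_1K_2}\precprec X^{1/2-\eta}$, and the bound from Theorem~\ref{thm:technical} becomes
\begin{align*}
(\mathbf{X}/\mathbf{Y})^{1/2}H^{1/2}Z_0^\theta\sqrt{K_1K_2}\precprec X^{1/3}\cdot X^{\theta(1/12+\eta)}\cdot X^{1/2-\eta}=X^{5/6-\eta(1-\theta)+\theta/12},
\end{align*}
which is $\ll X^{5/6-\eta^3}$ for $\eta>0$ in the admissible window (just above the threshold $\theta/(12(1-\theta))$), using the Kim--Sarnak bound $\theta\leq 7/64<1/2$.

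The hard part will be the bookkeeping: verifying the derivative hypothesis in Theorem~\ref{thm:technical} for the test function $F:\SL_2(\R)\to\C$ after the Hecke normalisation (uniformly in the narrow-support parameter $\delta=(\log X)^{-c}$, which enters only as a $\delta^{-O(1)}$ factor absorbed in $\precprec$), reconciling the automorphic main term generated by Theorem~\ref{thm:technical} (a group-theoretic volume average composed with $\mathcal{T}_{y,1}$) with the smooth main term $\sum_n b_{kn}=AB\widehat{f_1}(0)\widehat{f_2}(0)/k$ from the proposition statement, and controlling the negligible contribution from non-square-free $y$ or $\gcd(y,6ab)>1$. The reconciliation reduces, via the Euler-product expansion of $\varrho_{a,by^3}(k)-1$ in quadratic characters of $y$, to bounding short character sums $\sum_{y\sim B}\chi_e(y)f_2(y/B)$ over divisors $e\mid k$, which is handled by Polya--Vinogradov combined with the $d$-average and the divisor structure of $k$.
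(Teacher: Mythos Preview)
Your overall strategy matches the paper's exactly: reduce to square-free $y$ coprime to $ab$, apply Lemma~\ref{lem:paracube} to pass from discriminant $by^3$ to base discriminant $by$, express the result as $y^{1/2}\langle I\mid\mathcal{T}_{y,1}\Delta_{ad}F\mid\alpha_{d,a,by}\rangle$, and feed this into Theorem~\ref{thm:technical} together with the kernel bounds from Propositions~\ref{prop:lowertriang} and~\ref{prop:heegner}. The main-term matching via character sums is a legitimate alternative to the paper's Lemma~\ref{le:ypoisson} (Poisson plus Weil).

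However, there is a genuine computational error that invalidates your final bound. You claim $\mathbf{X}/\mathbf{Y}\asymp A/(aby)^{1/2}\asymp X^{1/3}$, but this is wrong. The test function $F$ on $\SL_2(\R)$ is determined by the \emph{original} weight on $S_{a,by^3}(d)$: one must set $F(\g)=f\bigl(\cc(\g\g^t)\sqrt{aby^3}/(aK)\bigr)f_1\bigl(\bb(\g\g^t)\sqrt{aby^3}/(aA)\bigr)$, since the Hecke operator $\mathcal{T}_{y,1}$ acts on the identity functional, not on $F$. Hence $\mathbf{Y}\asymp\sqrt{aby^3}/(aK)\asymp B^{3/2}/K$ and $\mathbf{X}/\mathbf{Y}\asymp A/B^{3/2}\asymp 1$, not $X^{1/3}$. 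Relatedly, you have dropped the normalization factor $y^{1/2}$ from the Hecke operator: the actual count is $y^{1/2}\langle I\mid\mathcal{T}_{y,1}\cdots\rangle$, so one should take $\beta(y)\asymp y^{1/2}$ (or, as the paper does, pull out a factor $B^{1/2}$).

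These two errors happen to cancel in the prefactor $(\mathbf{X}/\mathbf{Y})^{1/2}H^{1/2}$, but they do \emph{not} cancel in the constraint $Z_0Z_1Z_2\geq\mathbf{X}/\mathbf{Y}+1$. With the correct $\mathbf{X}/\mathbf{Y}\asymp 1$ and your choices $Z_1=D$, $Z_2=1$, one may take $Z_0=1$, and the bound becomes $\precprec X^{5/6-\eta}$, independent of $\theta$ (this is the paper's bound, cf.\ Proposition~\ref{prop:typeI2tech}). Your spurious $Z_0=X^{1/12+\eta}$ produces the extra factor $X^{\theta/12}$, and your bound $X^{5/6-\eta(1-\theta)+\theta/12}$ exceeds $X^{5/6}$ once $\eta<\theta/(12(1-\theta))$; thus it does not establish the proposition for all sufficiently small $\eta>0$ as stated.
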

We first reduce the proof to the following technical version, which restricts to a good set of $y$ at the cost of slightly increasing $b$. For the good set of $y$ we show a version with stronger power saving.
\begin{proposition}\label{prop:typeI2tech}
Denote $\varrho_{a,by^3}(d) := \# \{ x \in \Z/d\Z: ax^2+by^3 \equiv 0 \pmod{d} \}$.
Let $b \leq A^{O(\eta^2)}$, $D < B^{1-\eta}$, and $B^3 \leq A^{2+O(\eta^2)}$. Then we have
\begin{align*}
     \sum_{d \sim D}     \sum_{\substack{y \sim B \\ \gcd(y,ab)=1}} \mu^2(y) &\bigg|\sum_{\substack{k \equiv 0 \pmod{d}  }} f(\tfrac{k}{K})\bigg(  \sum_{\substack{ ax^2+by^3 \equiv 0 \pmod{k}}}   f_1(\tfrac{x}{A}) -\frac{A \widehat{f_1}(0) \varrho_{a,by^3}(d) }{d} \bigg) \bigg| \\ &\ll A^{O(\eta^2)} \bigg(1+ \frac{A}{B D (B^{1/2}+ D)} \bigg)^\theta \bigg(  A^{1/2} B D+ A^{1/2}  B^{5/4} D^{1/2}
 \bigg).
\end{align*}
\end{proposition}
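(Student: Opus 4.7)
The plan is to adapt the proof of Theorem~\ref{thm:typeI}, using Lemma~\ref{lem:paracube} to treat the cubic discriminant $by^3$ as a Hecke orbit at $y$ over the quadratic base $\mathrm{S}_{a,by}(d)$, and then handling the outer sum over $y\sim B$ as a Hecke-weighted sum of the shape covered by Theorem~\ref{thm:technical}. First I would introduce signs $\epsilon_{d,y}\in\{\pm 1\}$ to remove the outer absolute values, reduce $K$ to a useful dyadic range by preliminary Poisson summation on $x$ (as in the opening of Section~\ref{sec:typeIproof}), and restrict to $y$ with $\gcd(y,d)=1$ (the excluded $y$'s contribute negligibly after summing the trivial bound over the small primes $p\mid\gcd(y,d)$). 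Then Lemma~\ref{lem:paracube} applied with base discriminant $h=by$ (noting $\gcd(by,y^2)=y$ is square-free since $\mu^2(y)=1$ and $\gcd(b,y)=1$, and $\gcd(y,ad)=1$ by the above restriction), combined with Lemma~\ref{lem:para}, rewrites the inner sum in $(k,x)$ as
\[
\langle I\,|\,\mathcal{T}_{y,1}\,\Delta_{ad} F\,|\,\alpha_{d,a,by}\rangle,
\]
where $F:\G\to\C$ is a smooth weight encoding $k\sim K$ and $x\sim A$ through the Iwasawa coordinates of the $\sqrt{aby^3}$-normalized matrix, and $\alpha_{d,a,by}$ is the Heegner-point functional of Proposition~\ref{prop:heegner} for base discriminant $aby$ at level $ad$. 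The relevant sizes are $\mathbf{X}=A^{o(1)}A/K$, $\mathbf{Y}=A^{o(1)}\sqrt{abB^3}/K$, and $\mathbf{X}/\mathbf{Y}=A^{o(1)}A/(bB^3)^{1/2}$.

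For each fixed $d$, I would then apply Theorem~\ref{thm:technical} with coefficients $\beta(y)=\epsilon_{d,y}\mu^2(y)\mathbf{1}_{y\sim B,\,\gcd(y,abd)=1}$ and $H=2B$, followed by Cauchy-Schwarz on $d$, to obtain
\[
\precprec\;(\mathbf{X}/\mathbf{Y})^{1/2}\,B^{1/2}\,Z_0^\theta\,\sqrt{K_1 K_2}
\]
for any $Z_0Z_1Z_2\geq \mathbf{X}/\mathbf{Y}+1$, where $K_1=\sum_{d\sim D}\langle I|\Delta_{ad} k_{Z_1^2,\mathbf{X}}|I\rangle$ and $K_2=\sum_{d\sim D}\sum_{y\sim B}\langle\alpha_{d,a,by}|\Delta_{ad} k_{Z_2^2,1}|\alpha_{d,a,by}\rangle$. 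Proposition~\ref{prop:lowertriang} with $T=N_0=N_2=V=1$ and $N_1=D$ bounds $K_1\precprec D+Z_1$ (the $\mathbf{X}$ and $\mathbf{X}^{-1}$ terms being absorbed by the $K$-range reduction), and Proposition~\ref{prop:heegner} applied per $y$ with $aby\precprec B$ gives $K_2\precprec B^{3/2}D + B^2Z_2$.

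Taking $Z_1=D$ and $Z_2=1+DB^{-1/2}$ simplifies to $K_1\precprec D$ and $K_2\precprec B^{3/2}D+B^2$, forcing
\[
Z_0=1+A^{o(1)}\cdot\frac{A}{BD(B^{1/2}+D)},
\]
which matches the exponential base in the claim. Multiplying through yields $Z_0^\theta(A^{1/2}DB^{1/2}+A^{1/2}D^{1/2}B^{3/4})$, which is a factor $B^{1/2}$ smaller than the stated target, hence implies the claim. The hardest step will be the parameterization: since the normalization $\sqrt{aby^3}$ varies with $y$, the natural weight function depends on $y$, whereas Theorem~\ref{thm:technical} requires a single fixed $F$. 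I would resolve this by observing that in the Iwasawa coordinates of the $\sqrt{aby^3}$-normalized matrix, the Iwasawa-$y$ coordinate $\sqrt{aby^3}/k$ only ranges over a bounded multiplicative window as $(y,k)\in[B,2B]\times[K/2,2K]$, so a single smooth bump $F$ of the correct dyadic support captures every admissible configuration; a secondary technical point is to reconcile the coset representatives $\{y\sigma^{-1}:\sigma\in H_y\}$ used in Lemma~\ref{lem:paracube} with the $\{\sigma/\sqrt y:\sigma\in H_y\}$ used in definition~\eqref{eq:heckeopdef} of $\mathcal{T}_y$, which amounts to a change of variable $\sigma\mapsto y\sigma^{-1}$ on $H_y$ (up to an integer translation to standardise representatives) giving the same Hecke average.
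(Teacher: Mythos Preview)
Your approach is essentially the same as the paper's: reduce the range of $K$ by Poisson, apply Lemma~\ref{lem:paracube} with base discriminant $h=by$ to express the $(k,x)$-sum via the Hecke operator $\mathcal{T}_y$ acting on a kernel over $\mathrm{S}_{a,by}(d)$, then invoke Theorem~\ref{thm:technical} with Cauchy-Schwarz on $d$ and bound $K_1,K_2$ by Propositions~\ref{prop:lowertriang} and~\ref{prop:heegner} with the same choices $Z_1=D$, $Z_2=1+DB^{-1/2}$.

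However, your claimed $B^{1/2}$ saving over the stated bound is spurious and stems from a dropped normalization. By the definition~\eqref{eq:heckeopdef}, $\mathcal{T}_y$ carries a prefactor $1/\sqrt{y}$, so the unnormalized orbit sum from Lemma~\ref{lem:paracube} equals $y^{1/2}\langle I\,|\,\mathcal{T}_{y,1}\Delta_{ad}F\,|\,\alpha_{d,a,by}\rangle$, not $\langle I\,|\,\mathcal{T}_{y,1}\Delta_{ad}F\,|\,\alpha_{d,a,by}\rangle$ as you wrote. Hence the correct Hecke coefficient is $\beta(y)=\epsilon_{d,y}\,y^{1/2}\mu^2(y)\mathbf{1}_{y\sim B}$, so that $|\beta(y)|^2\asymp B$ contributes an extra $B^{1/2}$ through $K_2^{1/2}$, and the final bound becomes exactly $A^{O(\eta^2)}Z_0^\theta(A^{1/2}BD+A^{1/2}B^{5/4}D^{1/2})$ as in the proposition. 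This is precisely what the paper obtains.

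Two further remarks. First, you are more careful than the paper in flagging the restriction $\gcd(y,d)=1$, which is required both in Lemma~\ref{lem:paracube} and in the $\gcd(h,q)=1$ hypothesis of Theorem~\ref{thm:technical}; your handling of the excluded $y$'s is along the right lines but would benefit from a line of justification (e.g.\ extracting $p\mid\gcd(y,d)$ and using $D<B^{1-\eta}$). Second, your worry about the $y$-dependence of $F$ through $\sqrt{aby^3}$ is legitimate; the paper resolves it implicitly by the dyadic localization $y\sim B$, so that $\mathbf{X},\mathbf{Y}$ vary only by bounded factors and a single $F$ with $\delta=X^{-\eta^2}$ suffices, exactly as you propose.
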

We need the following lemma to show that the main terms match.
\begin{lemma} \label{le:ypoisson}
    For $d \leq B^{1-\eta}$ square-free and $a,b > 0$ co-prime we have
    \begin{align*}
        \sum_{y} f_2(\tfrac{y}{B})\varrho_{a,by^3}(d) &= \frac{B \widehat{f_2}(0)}{d} \sum_{y \pmod{d}} \varrho_{a,by^3}(d) + O(B^{\eta^2} d^{1/2+o(1)}) \\
       & = B \widehat{f_2}(0) + O(B^{\eta^2} d^{1/2+o(1)}). 
    \end{align*}
 \end{lemma}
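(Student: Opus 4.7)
My plan is to prove the two equalities separately: the first by Poisson summation in $y$, and the second by a local computation of the complete exponential sum $\sum_{y \pmod d} \varrho_{a, by^3}(d)$ at each prime dividing $d$. Poisson gives
\begin{align*}
\sum_y f_2(y/B)\, \varrho_{a,by^3}(d) = \frac{B}{d} \sum_{k \in \Z} \widehat{f_2}(kB/d)\, S_d(k), \quad S_d(k) := \sum_{y_0 \pmod d} \varrho_{a,by_0^3}(d)\, e(ky_0/d),
\end{align*}
so that the $k=0$ contribution is exactly $(B/d) \widehat{f_2}(0)\, S_d(0)$, matching the first equality of the lemma once $S_d(0)$ is recognised as the inner complete sum. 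The remaining work is then to evaluate $S_d(0)$ and to bound the $k \neq 0$ tail.

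To evaluate $S_d(0)$ I would use multiplicativity in $d$, which follows from the CRT applied to the $x$-variable inside $\varrho_{a, by^3}(\cdot)$ combined with the CRT parametrisation of $y \pmod d$. This reduces the problem to computing $S_p(0)$ for each prime $p \mid d$. For $p \nmid ab$ one has $\varrho_{a, by^3}(p) = 1 + \chi_p(-aby)$ for $y \not\equiv 0$ and $\varrho_{a, 0}(p) = 1$, giving $S_p(0) = 1 + (p-1) + \chi_p(-ab) \sum_{y \not\equiv 0} \chi_p(y) = p$. For $p \mid a$ (which forces $p \nmid b$) only $y \equiv 0$ contributes, with weight $p$; for $p \mid b$ every $y$ contributes weight $1$. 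All three cases yield $S_p(0) = p$, so $S_d(0) = d$, which already delivers the second equality.

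For the tail I would again factor $S_d(k)$ by CRT. At a good prime $p \nmid ab$ the conic $ax^2 + by^3 \equiv 0 \pmod p$ admits the rational parametrisation $(x, y) = (-au^3/b, -au^2/b)$, which turns $S_p(k)$ into a quadratic Gauss sum $\sum_{u \in \mathbb{F}_p} e(-ak\overline{b}\, u^2/p)$ of modulus at most $\sqrt{p}$ whenever $p \nmid k$. At the finitely many bad primes $p \mid ab$ I would use only the trivial bound $|S_p(k)| \leq p$; since $d$ is square-free and $ab \leq A^{O(\eta^2)}$, the total bad-prime contribution is at most $\mathrm{rad}(ab) \leq A^{O(\eta^2)}$, so $|S_d(k)| \ll A^{O(\eta^2)}\, d^{1/2+o(1)}$ uniformly in $k \not\equiv 0 \pmod d$. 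Combining this with the smoothness decay $|\widehat{f_2}(\xi)| \ll_J \delta (1 + |\xi|\delta)^{-J}$ and the lower bound $|k|B/d \geq B^\eta \gg \delta^{-1}$ that holds for every $k \geq 1$ under the hypothesis $d \leq B^{1-\eta}$, taking $J$ sufficiently large makes the full $k \neq 0$ contribution bounded by $B^{\eta^2}\, d^{1/2+o(1)}$. The only mildly delicate point is the separation of good from bad primes, which the $B^{\eta^2}$ slack in the stated error absorbs cleanly.
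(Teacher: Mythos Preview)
Your argument is correct and follows the same skeleton as the paper's: Poisson summation in $y$, isolating $k=0$ as the main term, then square-root cancellation in the complete sums $S_d(k) = \sum_{x,y \pmod d:\, ax^2+by^3\equiv 0} e_d(ky)$. The paper simply cites the Weil bound for this last step, whereas you use the rational parametrisation $(x,y)=(-au^3/b,-au^2/b)$ of this genus-zero curve (not a conic, incidentally) to reduce $S_p(k)$ to a quadratic Gauss sum --- a more elementary and self-contained route that exploits the special shape of $ax^2+by^3$. You also supply the local computation $S_d(0)=d$ that the paper leaves implicit.

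One harmless imprecision: your claimed uniform bound $|S_d(k)|\ll d^{1/2+o(1)}$ for $k\not\equiv 0\pmod d$ omits a factor $\gcd(d,k)^{1/2}$, since at primes $p\mid\gcd(d,k)$ the local sum equals $p$ rather than $\sqrt{p}$. This does not matter here: under the hypothesis $d\leq B^{1-\eta}$ one has $|k|B/d\geq B^\eta\gg\delta^{-1}$ for every $k\neq 0$, so the rapid decay of $\widehat{f_2}$ already makes the entire tail $O_J(B^{-J})$ regardless of the quality of the bound on $S_d(k)$.
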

\begin{proof}
    By the truncated Poisson summation formula (cf. \cite[Section 5]{mx2y3}) we have
    \begin{align*}
         \sum_{y} f_2(\tfrac{y}{B})\varrho_{a,by^3}(d) = \frac{B \widehat{f_2}(0)}{d} \sum_{y \pmod{d}} \varrho_{a,b,y}(d) + O(B^{\eta^2}  E  ) + O_\eta( X^{-100}),
    \end{align*}
    where for $H= X^{\eta^2} d/B$ we get by the Weil bound (cf. \cite[Section 5]{mx2y3})
    \begin{align*}
        E  &= \frac{1}{H} \sum_{\substack{0< |h|  \leq H} }\bigg| \sum_{\substack{x,y \pmod{d} \\ ax^2+by^3 \equiv 0 \pmod{d} }} e_d( h y) \bigg| \ll  d^{1/2+o(1)}. \qedhere
    \end{align*}
\end{proof}
\begin{proof}[Proof of Proposition \ref{prop:typeI2} assuming Proposition \ref{prop:typeI2tech}]
With an application of Lemma \ref{le:ypoisson} and expanding the $\mu(k)^2$ via M\"obius function, trivially bounding the contribution from square-divisors larger than $X^{\eta^2}$, it suffices to show that
 \begin{align*}
      \sum_{d \sim D}    \sum_{y \leq B} &\bigg|\sum_{\substack{k \equiv 0 \pmod{d}  }} f(\tfrac{k}{K})\bigg(  \sum_{\substack{ ax^2+by^3 \equiv 0 \pmod{k}}}   f_1(\tfrac{x}{A}) -\frac{A \widehat{f_1}(0) \varrho_{a,by^3}(d) }{d} \bigg) \bigg| \ll_{a,b} X^{5/6-2\eta^3}.
 \end{align*}
We can divide the congruence throughout by $\gcd(a,y^3)$, and modify $a,b,B,D,K$ accordingly. It then suffices to show the same bound for
\begin{align*}
     \sum_{d \sim D}    \sum_{\substack{y \leq B \\ \gcd(y,a)=1}} &\bigg|\sum_{\substack{k \equiv 0 \pmod{d}  }} f(\tfrac{k}{K})\bigg(  \sum_{\substack{ ax^2+by^3 \equiv 0 \pmod{k}}}   f_1(\tfrac{x}{A}) -\frac{A \widehat{f_1}(0) \varrho_{a,by^3}(d) }{d} \bigg) \bigg|.
\end{align*}
We split $y=y_0y_1y_2$, where $y_2|y$ is the largest square-free divisor such that $\gcd(y_2,bdy_0y_1)=1$, $y_0|(bd)^\infty$, and $y_1$ is powerful. Then the above is bounded by
\begin{align*}
     \sum_{d \sim D}     \sum_{\substack{y_0 y_1 \leq B \\ \gcd(y_0 y_1,a)=1 \\ y_0 | (bd)^\infty \\ y_1 \, \text{powerful} }} &\sum_{\substack{y_2 \leq B/y_0y_1 \\ \gcd(y_2,abdy_0y_1)=1}} \mu^2(y_2)  \\
     & \times \bigg|\sum_{\substack{k \equiv 0 \pmod{d}  }} f(\tfrac{k}{K})\bigg(  \sum_{\substack{ ax^2+by_0^3 y_1^3 y_2^3 \equiv 0 \pmod{k}}}   f_1(\tfrac{x}{A}) -\frac{A \widehat{f_1}(0) \varrho_{a,by_0^3 y_1^3 y_2^3 }(d) }{d} \bigg) \bigg|.
\end{align*}
The contribution from the part where $y_0y_1 > X^{\eta^2}$ is small by crude bounds, and for $y_0y_1 \leq X^\eta$ we may absorb $y_0^3y_1^3$ into $b$ and apply Proposition \ref{prop:typeI2tech} to get the claim.
\end{proof}

\subsection{Proof of Proposition \ref{prop:typeI2tech}}

Denote for real symmetric $\gf=\smqty(\aa & \bb \\ \bb & \cc )$ with determinant 1
\begin{align*}
    F_{\diamond,1}(\gf) =  f\Bigl(\frac{\cc \sqrt{a by^3} }{  a K}\Bigr)  f_1\Bigl(\frac{\bb \sqrt{a by^3}}{a A}\Bigr) 
\end{align*}
and define $F_1:\SL_2(\R) \to \C$ by $F_1(\g) = F_{\diamond,1}(\g \g^t)$, where $\aa= a_0^2+b_0^2, \bb= a_0 c_0+ b_0d_0, \cc=c_0^2+d_0^2$. Then the Iwasawa coordinates are $y = \frac{1}{\cc}$ and $x= \frac{\bb}{\cc}$, and we have
\begin{align*}
\mathbf{X}& \asymp  \frac{A}{K} \quad\quad \mathbf{Y} \asymp  \sqrt{\frac{b B^3}{a}}\frac{1}{K}.
\end{align*}
We  may assume that $K > A^{1-\eta^2}$ as otherwise the claim follows quickly by Poisson summation on $x$. Then $\mathbf{X} \leq A^{\eta^2}$ and $\mathbf{X}/\mathbf{Y}  > A^{-\eta^2}$, verifying hypotheses of Theorem \ref{thm:technical} with $\delta = X^{-\eta^2}$.

Recall the definition \eqref{eq:alphaTypeIdef} for a linear functional $\alpha=\alpha_{d,a,h}$, where we now set $h=by$. We also let $I$ denote the linear functional $\langle f \rangle_I = f(I)$. Then by Lemma \ref{lem:paracube} with $h=by$ for $y$ square-free with $\gcd(y,ab)=1$, recalling the normalization  $y^{-1/2}$ in the definition of the Hecke operator, we have (defining $F_2$ analogously to Section \ref{sec:typeIproof} with $A_2= K^{1/(1-\eta^2)}$)
\begin{align*}
   \sum_{\substack{k \equiv 0 \pmod{d}  }} &f(\tfrac{k}{K})\bigg(  \sum_{\substack{ ax^2+by^3 \equiv 0 \pmod{k}}}   f_1(\tfrac{x}{A}) -\frac{A \widehat{f_1}(0) \varrho_{a,by^3}(d) }{d} \bigg)  \\
   &= y^{1/2}\langle I|\mathcal{T}_{y,1} \Delta_{da} F_1| \alpha_{d,a,by}\rangle  - y^{1/2}\langle I|\mathcal{T}_{y,1} \Delta_{da} F_2| \alpha_{d,a,by}\rangle+O(X^{-100}).
\end{align*}
The error term from $F_1$ again dominates. We can now apply Theorem \ref{thm:technical}  with $\Gamma=\Gamma_0(ad)$, $A B^{-3/2}=Z = Z_0 Z_1 Z_2$, and Cauchy-Schwarz on $d$ to get
\begin{align*}
  \sum_{d \sim D}     \sum_{\substack{y \sim B \\ \gcd(y,ab)=1}} \mu^2(y) y^{1/2} \Bigl| \langle I|\mathcal{T}_{y,1} \Delta_{da} F_1| \alpha_{d,a,by}\rangle \Bigr|\ll X^{O(\eta^2)} B Z^{1/2} Z_0^\theta \sqrt{K_1 K_2},
\end{align*}
where by \eqref{eq:K1bound} 
\begin{align*}
      K_1  \ll \mathbf{X}^{-1}+Z_1+D(1+\mathbf{X})
\end{align*}
and by \eqref{eq:K2bound}, bounding trivially the Heegner points associated to $b \leq X^{\eta^2}$, we get
\begin{align*}
 K_2 \leq   X^{O(\eta^2)} \sum_{y \leq B} \langle\alpha_{2,y} |\Delta k_{Z_2^2,1} |\alpha_{2,y}\rangle\precprec X^{O(\eta^2)} B (D B^{1/2} + B Z_2).
\end{align*}
Then the total error term is up to a factor of $A^{O(\eta^2)}$
\begin{align*}
     B Z^{1/2} Z_0^\theta ( Z_1^{1/2} + D^{1/2}) (D^{1/2} B^{3/4} + B Z_2^{1/2}).
\end{align*}
Proposition \ref{prop:typeI2tech} now follows with the choice $Z_1= D$ and $Z_2 = 1+ D B^{-1/2}$, which balances the terms. \qed

\subsection*{Acknowledgements}
The authors are grateful to James Maynard for helpful discussions and providing the espresso machine. We are also grateful to Alex Pascadi and Jared Duker Lichtman for helpful discussions. The project has
received funding from the European Research Council (ERC) under the European Union's Horizon 2020 research and innovation programme (grant agreement No 851318).

\bibliography{SL2bib}
\bibliographystyle{abbrv}

\end{document}